\newcommand\Defn[1]{\emph{\color{blue}#1}}
\renewcommand\emptyset{\varnothing}
\newcommand\Z{\mathbb{Z}}               
\newcommand\nZ{\Z_{\ge0}}               
\newcommand\R{\mathbb{R}}               
\newcommand\x{\mathbf{x}}
\newcommand\rEuler{\widetilde\chi}
\newcommand\rH{\widetilde{H}}
\newcommand\K{{\mathcal{K}}}
\newcommand\bK{{\mathcal{R}}}
\renewcommand\P{\mathcal{P}}
\newcommand\B{\mathcal{B}}
\renewcommand\phi{\varphi}
\newcommand\LatEnu{\mathrm{F}}
\DeclareMathOperator{\lk}{lk}
\DeclareMathOperator{\st}{st}
\DeclareMathOperator*{\relint}{relint}
\DeclareMathOperator*{\Int}{int}
\DeclareMathOperator*{\aff}{aff}
\DeclareMathOperator*{\cone}{cone}
\newtheorem{thm}{Theorem}[section]
\newtheorem{cor}[thm]{Corollary}
\newtheorem{lem}[thm]{Lemma}
\theoremstyle{definition}
\newtheorem{example}[thm]{Example}
\title{An Alexander-type duality for valuations}
\author{Karim Adiprasito}
\author{Raman Sanyal}
\address{Fachbereich Mathematik und Informatik, %
Freie Universit\"at Berlin, %
Germany}
\email{\{adiprasito,sanyal\}@math.fu-berlin.de}
\date{\today}
\thanks{K.~Adiprasito has been supported by DFG within the research training
group ``Methods for Discrete Structures'' (GRK1408) and by the Romanian NASR,
CNCS -- UEFISCDI, project PN-II-ID-PCE-2011-3-0533.}
\thanks{R.~Sanyal has been supported by European Research Council under the
European Union's Seventh Framework Programme (FP7/2007-2013) / ERC grant
agreement n$^\mathrm{o}$ 247029.}
\begin{document}

\begin{abstract}
 We prove an Alexander-type duality for valuations for certain
 subcomplexes in the boundary of polyhedra. These strengthen and simplify
 results of Stanley (1974) and Miller-Reiner (2005). We give a generalization
 of Brion's theorem for this relative situation and we discuss the topology of
 the possible subcomplexes for which the duality relation holds.
\end{abstract}

\maketitle

\section{Introduction}
\newcommand\F{\mathcal{F}}

Let $P \subset \R^d$ be a convex polytope with vertices in $\Z^d$ and let $q
\in \R^d$. Viewing $q$ as a light source, let $B \subseteq \partial P$ be 
the collection of points in the boundary of $P$
visible from $q$ --   the \emph{bright side} of $P$. 
That is, $B$ is the set of points $p \in \partial P$ for
which the open segment $(q,p)$ does not meet the relative interior of $P$.
Sticking to these figurative terms, let $D$ be the closure of the set of
\emph{dark points} $\partial P {\setminus}
B$. Stanley~\cite{stanley74} showed that for integral $n \ge 1$ the function
\[
    E_{P,B}(n) \ := \ | n \cdot (P {\setminus} B) \cap \Z^d |
\]
is the restriction of a univariate polynomial (and, by abuse of notation,
identified with that polynomial), and that
\begin{equation}\label{eqn:rd}
 (-1)^{\dim P} E_{P,B}(-n) \ = \ | n \cdot (P {\setminus} D) \cap \Z^d|
 \quad\text{for all } n \ge 1.
\end{equation}
By choosing $q \in \relint P$, we have that $(B,D) = (\emptyset,\partial P)$
and \eqref{eqn:rd} reduces to the well-known Ehrhart-Macdonald
reciprocity~\cite{macdonald}; see~\cite{BR07} for details. The set $B
\subseteq \partial P$ is a particular case of what Ehrhart~\cite{ehr1,ehr2}
calls a \Defn{reciprocal domain}, that is, a domain for
which~\eqref{eqn:rd} holds.

For a subset $S \subset \R^{d+1}$, the 
\Defn{lattice point enumerator} of $S$
is the multivariate Laurent series
\[
    \LatEnu_S(\x) \ := \  \sum_{a \in S \cap \Z^{d+1}} \x^a
\]
where $\x^a = x_1^{a_1} x_2^{a_2} \cdots x_{d+1}^{a_{d+1}}$. If we associate
to $P$ the pointed cone $C(P) := \cone( P \times \{1\})$ $\subset
\R^{d+1}$, then $\LatEnu_{C(P)}(\x)$ records the individual lattice points $(a,n) \in
\Z^{d+1}$ for
which $a \in n P$.  Stanley~\cite[Prop.\ 8.3]{stanley74} actually proved the
stronger result that
\begin{equation}\label{eqn:F_rd}
    (-1)^{\dim P} \LatEnu_{C(P{\setminus} B)}\left(\tfrac{1}{\x}\right) \ = \ \LatEnu_{C(P
    {\setminus} D)}(\x).
\end{equation}
where $\tfrac{1}{\x} = (\tfrac{1}{x_1}, \tfrac{1}{x_1}, \dots, \tfrac{1}{x_{d+1}})$.

The relation~\eqref{eqn:F_rd} holds for general rational pointed polyhedral
cones $C$ but not for arbitrary subsets in the boundary of $C$.  To see this,
we can choose $B$ as two non-adjacent triangles in the boundary of a
$3$-dimensional pyramid; one can check that $B$ is not a reciprocal domain.
The question which subsets in the boundary of $C$ are reciprocal domains was
investigated by Miller and Reiner~\cite{MR06}. They showed that the conditions
giving rise to reciprocal domains are topological rather than geometric in
nature. Let $C \subset \R^{d+1}$ be a rational, pointed polyhedral cone and
let $\Delta$ be a full-dimensional subcomplex of the boundary of $C$, i.e.\
$\Delta$ is a polyhedral complex induced by a collection of facets of $C$. Let
$\Delta^\prime$ be the subcomplex generated by the facets $F \not\in \Delta$.
Their result is
\begin{thm}[{\cite[Thm.~1]{MR06}}]\label{thm:MR1}
    If $\Delta$ is a Cohen-Macaulay complex, then
    \begin{equation}\label{eqn:MR1}
        (-1)^{d+1} \LatEnu_{C {\setminus} |\Delta|}\left(\tfrac{1}{\x}\right) \ = \ \LatEnu_{C
        {\setminus} |\Delta^\prime|}(\x).
    \end{equation}
\end{thm}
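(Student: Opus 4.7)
My plan is to expand both sides of (\ref{eqn:MR1}) in the basis of face-interior enumerators $\{\LatEnu_{\relint H}(\x)\}_{H \in \mathcal{F}(C)}$, reduce the theorem to a combinatorial identity indexed by faces of $C$, and verify that identity via the topology of links---where the Cohen-Macaulay hypothesis enters.

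First, using the disjoint decomposition $C = \bigsqcup_{F \in \mathcal{F}(C)} \relint F$ and the analogous description of $|\Delta|$, I would write $\LatEnu_{C \setminus |\Delta|}(\x) = \sum_{F \in \mathcal{F}(C),\, F \notin \Delta} \LatEnu_{\relint F}(\x)$. Applying Stanley's reciprocity $\LatEnu_{\relint F}(1/\x) = (-1)^{\dim F}\LatEnu_F(\x)$ face by face, together with the M\"obius relation $\LatEnu_F = \sum_{H \le F}\LatEnu_{\relint H}$, rewrites the left-hand side of (\ref{eqn:MR1}) as
\[
    (-1)^{d+1}\LatEnu_{C \setminus |\Delta|}(1/\x) \;=\; \sum_{H \in \mathcal{F}(C)} \alpha(H)\, \LatEnu_{\relint H}(\x),
\]
where $\alpha(H):=(-1)^{d+1}\sum_{F \supseteq H,\, F \notin \Delta}(-1)^{\dim F}$. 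Since the right-hand side of (\ref{eqn:MR1}) equals $\sum_{H \notin \Delta'}\LatEnu_{\relint H}(\x)$ and the formal series $\LatEnu_{\relint H}$ have pairwise disjoint supports in $\Z^{d+1}$, the theorem reduces to the pointwise identity $\alpha(H) = [H \notin \Delta']$ for every face $H$ of $C$.

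A case analysis handles the extremes: $H=C$ gives $\alpha(C)=1=[C\notin \Delta']$ immediately, while for a proper face $H=\cone(G)$ (with $G$ a face, possibly empty, of a cross-sectional polytope $P$ of $C$), the Euler relation $\sum_{F \in \mathcal{F}(C)}(-1)^{\dim F}=0$ and its analogue on the interval $[G,P]$ in the face lattice of $P$ show that the unrestricted star sum $\sum_{F \supseteq H}(-1)^{\dim F}$ vanishes. Consequently $\alpha(H)$ is governed by the $\Delta$-restricted sum alone, which a standard link calculation identifies, up to an explicit sign, with $\rEuler(\lk_{\Delta_P}(G))$, where $\Delta_P \subset \partial P$ is the cross-sectional subcomplex corresponding to $\Delta$. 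The main obstacle, and precisely where the Cohen-Macaulay hypothesis is used, is to match this reduced Euler characteristic with the indicator $[H \notin \Delta']$: Cohen-Macaulayness is inherited by links, so $\lk_{\Delta_P}(G)$ is itself Cohen-Macaulay, its reduced homology is concentrated in top degree, and $\rEuler(\lk_{\Delta_P}(G)) = \pm \dim \rH_{\mathrm{top}}(\lk_{\Delta_P}(G))$. This link is a full-dimensional subcomplex of the sphere $\lk_{\partial P}(G)$, and its top reduced homology is nonzero precisely when the inclusion is an equality---equivalently, when every facet of $\partial P$ through $G$ already lies in $\Delta_P$, which is exactly the condition $H \notin \Delta'$. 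A careful sign check then completes the argument.
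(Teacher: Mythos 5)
Your proposal is correct and follows essentially the same route as the paper, which factors the argument into Lemma~\ref{lem:genF} (the expansion into face enumerators with coefficients $\pm\rEuler(\lk_\Delta(\sigma_a))$) and Lemma~\ref{lem:wCM_Eulerian} (the Alexander-duality computation of link homology for full-dimensional weakly CM subcomplexes of a homology sphere). The only bookkeeping difference is that you expand over $F \notin \Delta$ and then invoke the Euler relation on star intervals to pass to the $\Delta$-restricted sum, whereas the paper subtracts the $\Delta$-contribution from $\LatEnu_C$ directly; your postponed sign check indeed works out, giving $\alpha(H) = (-1)^{d+\dim H+1}\rEuler(\lk_\Delta(H)) = 1$ for $H$ interior and $0$ otherwise.
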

The proof of Theorem~\ref{thm:MR1} in~\cite{MR06} is given in terms of
combinatorial commutative algebra and relies on a connection between lattice
point enumerators and Hilbert series of $\Z^d$-graded modules. 

In this paper we give a simple proof of~\eqref{eqn:rd} and~\eqref{eqn:MR1}
that generalizes to a broader class of geometric objects and to valuations
other than counting lattice points (see Theorem~\ref{thm:atdvc}). Our proof
relies on basic facts from topological combinatorics and, as a byproduct,
gives a slightly more general class of complexes for which~\eqref{eqn:rd}
holds. Like Theorem~\ref{thm:MR1}, our results are reminiscent of Alexander
duality and we will emphasize this relation throughout.

The paper is organized as follows. In Section~\ref{sec:basics}, we recall
the notions of $\Lambda$-polytopes and valuations as well as (weakly)
Cohen-Macaulay complexes. In Section~\ref{sec:main} we state and prove an
Alexander-duality type relation which contains Thm.~\ref{thm:MR1} as a
special case. In Section~\ref{sec:rel_brion}, we give a relative version
of Brion's theorem which is interesting in its own right and highlights
the role played by weakly Cohen-Macaulay complexes. In
Section~\ref{sec:top} we focus on the topology of full-dimensional
(weakly) Cohen-Macaulay complexes in the boundary of spheres.  The bright
side $B$ of $P$ is homeomorphic to a ball of dimension $\dim P - 1$ and
thus Cohen-Macaulay. A natural question, which was answered affirmatively
in~\cite{MR06}, is if there exist full-dimensional Cohen-Macaulay
complexes in the boundary of polytopes that are not balls. We will extend
this result and we discuss possibly counterintuitive instances for
which~\eqref{eqn:rd} and~\eqref{eqn:MR1} apply.

\section{$\Lambda$-polytopes, valuations, and weakly Cohen-Macaulay complexes}
\label{sec:basics}

We start by setting the stage for the use of more general geometric objects
and valuations, following McMullen~\cite{mcmullen}.  Throughout, let $\Lambda
\subset \R^d$ be a fixed, full-dimensional discrete lattice or a vector space
over some subfield of $\R$.  We denote by $\P = \P(\Lambda)$ the collection of
polytopes in $\R^d$ with vertices in $\Lambda$.
A \Defn{$\Lambda$-valuation} is a map $\phi$ from $\P$ into
some abelian group such that
\[
    \phi(P \cup Q) \ = \ \phi(P) + \phi(Q) - \phi(P \cap Q)
\]
whenever $P \cup Q \in \P$ (and hence $P \cap Q \in \P$) and such that $\phi(t
+ P) = \phi(P)$ for all $t \in \Lambda$. We can extend $\phi$ to
\emph{half-open} polytopes as follows. If $B \subset \partial P$ is a the
union of facets $F_1,F_2,\dots,F_m$ of $P$, then
\[
\phi(P {\setminus} B) \ := \ \sum_{J \subseteq [k]} (-1)^{|J|}\,\phi(F_J)
\]
where $F_J := \bigcap\{ F_j : j \in J\}$.
In particular, if $B = \partial P$, we get
\begin{equation}\label{eqn:relint}
    \phi(\relint P) \ = \ \sum_{F \subseteq P} (-1)^{\dim P - \dim F}\phi(F)
\end{equation}
where the sum is over all non-empty faces $F$ of $P$. The following is the
basis for our considerations.
\begin{thm}[{\cite{mcmullen}}]\label{thm:rec}
    If $\phi$ is a $\Lambda$-valuation, then for all $n \in \nZ$
    \[
        \phi_P(n) \ := \ \phi(nP) 
    \]
    agrees with a univariate polynomial of degree $\le \dim P$ and
    \[
        (-1)^{\dim P} \phi_P(-1) \ = \ \phi( \relint(-P) ).
    \]
\end{thm}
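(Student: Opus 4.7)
The plan is to establish both assertions inductively on $d := \dim P$; the base case $d = 0$ is immediate from $\Lambda$-invariance, since $nP$ is a single $\Lambda$-translate of $0$ and $\relint(-P) = -P$ is likewise a single $\Lambda$-point, so both sides are constant and agree.

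For polynomiality in the inductive step, I first triangulate $P$ into $\Lambda$-simplices $\sigma_1,\ldots,\sigma_N$ (with vertices among those of $P$). Iterating the valuation axiom yields
\[
\phi(nP) \ = \ \sum_{\emptyset \ne I \subseteq [N]} (-1)^{|I|+1} \phi(n\sigma_I), \qquad \sigma_I := \bigcap_{i \in I}\sigma_i,
\]
where each $\sigma_I$ is a $\Lambda$-simplex of dimension $\le d$, and of dimension $<d$ whenever $|I| \ge 2$. The lower-dimensional terms are polynomial by induction, so it suffices to prove polynomiality for a single $d$-simplex $\sigma$. Here I would invoke McMullen's Minkowski-polynomiality theorem for $\Lambda$-valuations — $\phi(\lambda_1 Q_1 + \cdots + \lambda_r Q_r)$ is polynomial in $\lambda_i \in \nZ$ — specialized at $r = 1$, $Q_1 = \sigma$. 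A direct shell/frustum telescoping $\phi(n\sigma) - \phi((n-1)\sigma) = $ polynomial in $n$, reducing via a half-open decomposition to the facet opposite a fixed apex, is the standard alternative and gives the degree bound $\le d$.

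For reciprocity, the disjoint decomposition $nP = \bigsqcup_{F \le P,\,F \ne \emptyset}\relint(nF)$ combined with \eqref{eqn:relint} produces the polynomial identity
\[
\phi_P(n) \ = \ \sum_{F \le P,\,F \ne \emptyset}\phi(\relint nF), \qquad \phi(\relint nF) \ = \ \sum_{\emptyset \ne G \le F}(-1)^{\dim F - \dim G}\phi_G(n),
\]
which in particular is meaningful at $n = -1$. The crucial step is a direct verification of reciprocity for a single $d$-simplex $\sigma$: one writes $\sigma$ as a half-open union coming from a fundamental-parallelepiped decomposition, computes $\phi_\sigma(n)$ and $\phi(\relint(-\sigma))$ explicitly from this expansion, and matches coefficients — the sign $(-1)^d$ emerging from the $d$ sign-flips incurred in passing between closed and open half-open pieces. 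Once reciprocity is known for simplices, the triangulation used for polynomiality, combined with the identity above and the observation that the faces of $-P$ are exactly the negatives of the faces of $P$, propagates reciprocity to every $\Lambda$-polytope.

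The main obstacle is this direct verification at the simplex level. A pure induction on the face lattice of $P$ produces only tautological consistency relations — substituting the inductive reciprocity for proper faces into the two displays above cancels and gives $0 = 0$ — so one must inject genuine duality input: either the half-open parallelepiped decomposition of $\sigma$ indicated above (tracking sign changes under dilation inversion), or equivalently the rational-function reciprocity enjoyed by the Hilbert series of $\cone(\sigma \times \{1\})$ under $\mathbf{x} \mapsto 1/\mathbf{x}$. Modulo this input, the reduction to simplices and the resulting polynomial identities are essentially formal.
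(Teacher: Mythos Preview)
The paper does not prove this statement: Theorem~\ref{thm:rec} is quoted from McMullen's paper with a citation and no argument. So there is no ``paper's own proof'' to compare against; your proposal is an attempt to reconstruct a proof of a result the authors take as input.

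On its own merits, your outline has the right architecture but two genuine gaps. First, for polynomiality at the simplex step you write that you ``would invoke McMullen's Minkowski-polynomiality theorem for $\Lambda$-valuations''. That theorem is precisely what is being cited here, so this is circular; you must actually carry out the telescoping argument you allude to. Concretely, for a $\Lambda$-simplex $\sigma$ with apex $v$, the difference $\phi(n\sigma)-\phi\bigl((n-1)\sigma + v\bigr)$ is $\phi$ of a half-open frustum, which by inclusion--exclusion is a $\Z$-linear combination of $\phi$ applied to dilates of proper faces, hence polynomial of degree $\le d-1$ by induction; summing the differences gives polynomiality of degree $\le d$.

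Second, you correctly diagnose that the reciprocity identity cannot be bootstrapped from the face lattice alone and that the simplex case needs an independent argument, but you do not supply one. Both mechanisms you name (the half-open parallelepiped decomposition of the cone over $\sigma$, or the $\mathbf{x}\mapsto 1/\mathbf{x}$ reciprocity for the generating series of $\cone(\sigma\times\{1\})$) work, but the latter is really the Ehrhart/Stanley version and presupposes a specific valuation, not a general $\Lambda$-valuation. For arbitrary $\Lambda$-valuations the cleanest route is McMullen's original: express $\phi_\sigma(n)$ via the half-open decomposition of $n\sigma$ into translates of a fixed half-open parallelepiped and its faces, observe that negating $n$ exchanges this with the complementary half-open decomposition of $-n\sigma$, and read off the sign $(-1)^d$. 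Until that computation is written down, the proposal remains a plan rather than a proof.
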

A \Defn{$\Lambda$-complex} is a polyhedral complex $\K$ such that every face
is a $\Lambda$-polytope.  The complex is \Defn{pure} if all inclusion-maximal
faces have the same dimension.  For example, the collection of proper faces of
a $\Lambda$-polytope $P$ is a pure $\Lambda$-complex, called the
\Defn{boundary complex} $\B(P)$.  The underlying set of $\K$ is denoted by
$|\K|$ and, since this is the disjoint union of relatively open polytopes, we
can extend $\phi$ to $\Lambda$-complexes by setting
\[
    \phi(|\K|) \ := \  \sum_{F \in \K} \phi(\relint F)
\]

For a given face $F$ in a
polyhedral complex $K$, the \Defn{link} of $F$ in $K$ is the polyhedral
subcomplex
\[
    \lk_\K(F) \ = \ \{ G \in \K : G \cap F = \emptyset, G \cup F \subseteq H \in
    \K \}.
\]
For a subcomplex $\Delta \subset \K$, a face $F \in \Delta$ is an
\Defn{interior face} of $\Delta$ if $\lk_\K(F) \subset \Delta$ and a
\Defn{boundary face} otherwise. The boundary of $\Delta$ is the subcomplex
$\partial\Delta$ of all boundary faces. Note that for $F \not\in \Delta$, we
have $\lk_\Delta(F) = \emptyset \not= \{\emptyset\}$ with reduced Euler
characteristic $\rEuler(\emptyset) = 0$.

A pure complex $\K$ is \Defn{weakly Cohen-Macaulay} if 
\[
    \rH_i(\lk_\K(F)) \ = \ 0 \qquad \text{for all } 0 \le i < \dim
    \lk_\K(F).
\]
for all non-empty faces $F \in K$.  Thus $\K$ is Cohen-Macaulay if
additionally $\rH_i(\K) = 0$ for all $0 \le i < \dim \K$. This is a stronger 
condition as, for instance, weakly Cohen-Macaulay complexes
are not necessarily connected. Since $G \subseteq F$
implies $\lk_\K(F) \subseteq \lk_\K(G)$, we get that $K$ is weakly
Cohen-Macaulay if and only if every vertex link of $K$ is Cohen-Macaulay.
Munkres~\cite{munkres} proved that Cohen-Macaulayness of a complex $K$ is a
topological property of the underlying pointset $|K|$ and hence $K$ is weakly
Cohen-Macaualy if $\rH_i(|K|,|K| {\setminus} p)$ vanishes for $i < \dim K$.
Note that what we define is the notion of (weakly) $\Z$-CM complexes as our
ring of coefficients is $\Z$ throughout (cf.~\cite[Sect.~11]{bjorner}); 
however, most of our results hold for general rings of coefficients.

Finally, a pure $\Lambda$-complex $\K$ of dimension $d$ is a \Defn{homology
manifold}, if for every face $F$ of $\K$, the reduced homology of $\lk_\K(F)$
is identically zero or if
\[
    \rH_\ast(\lk_\K(F)) \ \cong \ \rH_\ast(S^{d-\dim F + 1}).
\]
In particular, if $|\K|$ is a manifold, then $\K$ is a homology manifold, and every homology manifold is weakly CM.

\section{An Alexander-type duality}
\label{sec:main}

In this section we prove Alexander-type duality relations for
$\Lambda$-valuations that relates complementary complexes $\Delta$ and
$\Delta^\prime$ inside $\Lambda$-complexes.

\begin{thm}[Alexander-type duality for valuations]\label{thm:atdvc}
    Let $\K$ be a $d$-dimensional $\Lambda$-complex such that $\K$ is a
    homology manifold with boundary and let $B \subset \partial \K$ be a
    full-dimensional, weakly Cohen-Macaualay subcomplex. Let $D$ be the
    closure of $\partial P {\setminus} B$. If $\phi$ is a $\Lambda$-valuation,
    then for all $n \ge 1$
    \[
    (-1)^d \phi_{|\K| {\setminus} |B|}(-n) \ = \ \phi_{-(|\K| {\setminus} |D|)}(n)
    \]
    and
    \[
    (-1)^d \phi_{|\K| {\setminus} |B|}(0) \ = \ 
    \phi(\{0\}) \bigl(\rEuler(\K) - \rEuler(B))\bigr) \ = \
    \phi_{-(|\K| {\setminus} |D|)}(0).
    \]
\end{thm}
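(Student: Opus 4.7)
The plan is to decompose $|\K|{\setminus}|B|$ as a disjoint union of relatively open faces, apply the reciprocity of Theorem~\ref{thm:rec} face-wise to each side, and reduce to a purely combinatorial identity in the face poset of $\K$. Writing $c_F(n) := \phi(nF)$, Theorem~\ref{thm:rec} applied to $-F$ gives the polynomial identity $\phi(\relint(nF)) = (-1)^{\dim F}c_{-F}(-n)$; substituting $n\mapsto -n$ and using the face decomposition of $|\K|{\setminus}|B|$ one obtains
\[
    (-1)^d\,\phi_{|\K|{\setminus}|B|}(-n) \;=\; \sum_{F\in\K{\setminus} B}(-1)^{d-\dim F}\,\phi(-nF)
\]
for $n\ge 1$. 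On the other side, expanding each $\phi(\relint(-nF))$ by inclusion-exclusion in faces $G\le F$ yields
\[
    \phi_{-(|\K|{\setminus}|D|)}(n) \;=\; \sum_{G\in\K,\,G\ne\emptyset}\phi(-nG)\cdot\alpha_G, \qquad \alpha_G \,:=\, \sum_{F\in\K{\setminus} D,\,F\supseteq G}(-1)^{\dim F-\dim G}.
\]
Matching coefficients of $\phi(-nG)$ reduces the first identity to proving the combinatorial statement $\alpha_G = (-1)^{d-\dim G}\,[G\notin B]$ for every non-empty $G\in\K$.

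Using the bijection between faces of $\K$ containing $G$ and faces of $\lk_\K(G)$, $\alpha_G$ equals $-\rEuler(\lk_\K(G)) + \rEuler(\lk_D(G))$, where the second term is omitted when $G\notin D$. I split into four cases by the location of $G$. If $G$ is interior to $\K$, then $\lk_\K(G)$ is a homology $(d{-}\dim G{-}1)$-sphere and $\alpha_G = (-1)^{d-\dim G}$, matching the right-hand side. If $G\in\partial\K\setminus B$, then $G\in D$, $\lk_\K(G)$ is a homology ball (so its $\rEuler$ vanishes), and $\lk_D(G) = \lk_{\partial\K}(G)$ is a homology sphere one dimension lower, again matching. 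If $G\in B\setminus D$, both sides vanish since $\lk_D(G)$ is the empty complex and $\rEuler(\lk_\K(G))=0$.

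The one substantive case is $G\in B\cap D$, where both sides must be zero and, since $\rEuler(\lk_\K(G))=0$, the claim reduces to $\rEuler(\lk_D(G)) = 0$. This is where the weakly Cohen-Macaulay hypothesis on $B$ is essential: as $G$ is a non-empty face of $B$, the link $\lk_B(G)$ is Cohen-Macaulay of top dimension $n := d-\dim G - 2$ inside the homology $n$-sphere $\lk_{\partial\K}(G)$, and $\lk_D(G)$ is its full-dimensional complementary subcomplex. Applying Alexander duality in this sphere yields $\rH_j(\lk_D(G);\mathbb{Q}) \cong \rH^{n-j-1}(\lk_B(G);\mathbb{Q})$, and the Cohen-Macaulay vanishing of $\rH^{*}(\lk_B(G))$ below top dimension forces all reduced homology of $\lk_D(G)$ to vanish, so $\rEuler(\lk_D(G)) = 0$. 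Identifying and correctly applying Alexander duality inside a possibly non-PL homology sphere is the main obstacle of the argument. The assertion at $n=0$ then follows by evaluating the polynomial identity at zero together with the direct computation $\phi(\relint(nF))|_{n=0} = (-1)^{\dim F}\phi(\{0\})$ (via $c_G(0)=\phi(\{0\})$ and inclusion-exclusion), which when summed over $\K\setminus B$ (respectively $\K\setminus D$) produces the reduced-Euler-characteristic expressions stated.
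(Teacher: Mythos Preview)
Your overall strategy---decompose into relatively open faces, apply McMullen's reciprocity (Theorem~\ref{thm:rec}) term by term, and reduce to an Euler-characteristic identity for links---is exactly what the paper does. The difference is a choice of which side to expand. The paper writes
\[
(-1)^d\phi_{|\K|\setminus|B|}(-1)\;=\;\sum_{G\in\K\setminus B}(-1)^{d-\dim G}\phi(-G)\;=\;\sum_{\sigma\in\K}W_\sigma\,\phi(\relint(-\sigma)),
\]
so the coefficient $W_\sigma$ involves $\rEuler(\lk_\K(\sigma))-\rEuler(\lk_B(\sigma))$. You instead expand the $D$-side and arrive at coefficients $\alpha_G=-\rEuler(\lk_\K(G))+\rEuler(\lk_D(G))$. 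Since the weak Cohen--Macaulay hypothesis is on $B$, not on $D$, the paper's choice is the efficient one: its key Lemma~\ref{lem:wCM_Eulerian} (a one-line application of Alexander duality to the \emph{open} complement, or alternatively a collapsing argument) computes $\rEuler(\lk_B(\sigma))$ directly and finishes the proof.

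Your route hinges on $\rEuler(\lk_D(G))=0$ for $G\in B\cap D$, and the Alexander duality you invoke does not give this as stated. Alexander duality in the homology sphere $S=\lk_{\partial\K}(G)$ relates $\rH^{*}(\lk_B(G))$ to the homology of the \emph{open} complement $S\setminus|\lk_B(G)|$, which differs from $|\lk_D(G)|$ by the common boundary $|\lk_B(G)\cap\lk_D(G)|$; without further input there is no reason these have the same homology. A form of Alexander--Lefschetz duality for complementary codimension-zero submanifolds does yield your isomorphism, but it requires knowing that $\lk_B(G)$ is a homology manifold with boundary---precisely the content of Lemma~\ref{lem:wCM_Eulerian}, which you have not established. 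Concretely, once that lemma is in hand one can finish via inclusion--exclusion,
\[
\rEuler(\lk_D(G))\;=\;\rEuler(S)-\rEuler(\lk_B(G))+\rEuler\bigl(\partial\lk_B(G)\bigr)\;=\;(-1)^n-0+(-1)^{n-1}\;=\;0,
\]
the last step using that $\partial\lk_B(G)$ is a homology $(n{-}1)$-sphere by Lefschetz duality for the acyclic homology ball $\lk_B(G)$. So the claim is true, but proving it costs you Lemma~\ref{lem:wCM_Eulerian} \emph{plus} an extra duality argument. The clean fix is to expand the other side, so that the link you must analyze is $\lk_B(\sigma)$ rather than $\lk_D(G)$; then Lemma~\ref{lem:wCM_Eulerian} applies verbatim and nothing further is needed.
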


For the proof of the theorem we need to relate the combinatorics of
inclusion-exclusion for the valuation $\phi$ to the topology of $\Delta$. The
main observation, captured in the following lemma, is that weakly
Cohen-Macaulay $(d-1)$-complexes which are embedded into the boundary of a
$d$-dimensional homology manifold are rather restricted. 
\begin{lem} \label{lem:wCM_Eulerian}
    Let $\bK$ be a $(d-1)$-dimensional homology manifold without boundary and
    let $\Delta \subseteq \bK$ be a pure, weakly Cohen-Macaulay subcomplex of
    full dimension $d-1$. Then for every $\emptyset \neq F \in \Delta$ 
    \[
        \rH_k(\lk_\Delta(F)) \ = \ 
        \begin{cases}
            \Z,& \text{ if $F$ is an interior face of dimension $d-k-2$, and}\\
            0,& \text{ otherwise.}
        \end{cases}
    \]
\end{lem}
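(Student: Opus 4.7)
\emph{Proof plan.} Fix a nonempty face $F\in\Delta$ of dimension $m$, and set $L:=\lk_\Delta(F)$, $S:=\lk_\bK(F)$, and $n:=d-m-2$. The plan is to clear the below-top-dimensional reduced homology of $L$ using the weakly Cohen--Macaulay assumption, and then to pin down $\rH_n(L)$ by comparing $L$ with the homology sphere $S$.

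First I would observe that purity of $\Delta$ and of $\bK$ (which is pure since it is a homology manifold) propagates to links, so both $L$ and $S$ are pure of dimension $n$. Since $\bK$ is a $(d{-}1)$-dimensional homology manifold without boundary, $S=\lk_\bK(F)$ is a homology $n$-sphere. The weakly Cohen--Macaulay hypothesis on $\Delta$ directly gives $\rH_i(L)=0$ for all $0\le i<n$, so everything reduces to computing $\rH_n(L)$.

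The case split is then on whether $F$ is interior or boundary. If $F$ is interior, i.e.\ every facet of $\bK$ containing $F$ lies in $\Delta$, then $L$ and $S$ have the same top-dimensional faces, so by purity $L=S$ as complexes; hence $L$ is a homology $n$-sphere and $\rH_n(L)=\Z$. With $k=n$ this matches $\dim F = d-k-2$ and gives the first branch of the lemma. If $F$ is a boundary face, then some facet of $\bK$ containing $F$ is missing from $\Delta$, so $L$ is a proper pure subcomplex of $S$ omitting at least one $n$-facet of $S$.

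The main technical step is to deduce $\rH_n(L)=0$ in the boundary case. Since $S$ is a connected orientable closed homology $n$-manifold, $\rH_n(S)=\Z$ is generated by a fundamental class $[S]=\sum_T\varepsilon_T\,T$ with $\varepsilon_T\in\{\pm1\}$, supported on \emph{every} $n$-facet $T$ of $S$. Any top-dimensional cycle of $L$ is, a fortiori, a top cycle in $S$, hence a $\Z$-multiple of $[S]$; but any nonzero such multiple meets every $n$-facet of $S$, contradicting that $L$ omits one. Thus $\rH_n(L)=0$, and combined with the weakly Cohen--Macaulay input $L$ has vanishing reduced homology throughout. This fundamental-class argument, which rests on orientability of homology spheres over $\Z$, is the main obstacle; the rest is bookkeeping.
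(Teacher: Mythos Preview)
Your argument is correct and arrives at the same conclusion as the paper, but the key step in the boundary case is handled differently. The paper gives two alternatives: it invokes Alexander duality in the homology sphere $S=\lk_\bK(F)$, identifying $\rH_n(\lk_\Delta(F))$ with $\rH_{-1}(|S|\setminus|\lk_\Delta(F)|)=0$ since the complement is nonempty; or it observes that a proper full-dimensional subcomplex of a homology manifold has a free face and collapses into its $(n{-}1)$-skeleton. Your route is a direct chain-level computation: top-dimensional cycles of $L$ inject into $Z_n(S)=\Z\cdot[S]$, and since the fundamental class is supported on every facet of $S$, a subcomplex missing a facet carries no nonzero multiple of it. This is essentially the degree-$(-1)$ case of Alexander duality unpacked by hand, and is arguably the most elementary of the three arguments, requiring only that $S$ is a connected pseudomanifold (which follows from its being a homology sphere). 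The paper's collapsing argument, by contrast, gives slightly more geometric information (an explicit deformation retraction), while the Alexander duality phrasing makes the general pattern more visible.
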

In other words, a full-dimensional, weakly Cohen-Macaulay subcomplex of a
homology manifold is again a homology manifold. 
\begin{proof}
    The link $\lk_\Delta(F)$ is a subcomplex of $ L = \lk_\bK(F)$, which
    has the homology of a $k$-sphere. Thus, if $F$ is an interior face of
    $\Delta$, then $\lk_\Delta(F) = L$ and $\rH_*(\lk_\Delta(F)) =
    \rH_\ast(S^k)$.

    If $\lk_\Delta(F) \subsetneq  L$ is a proper subcomplex, it is
    sufficient to show that $\rH_k(\lk_\Delta(F)) = 0$ for $k = \dim
    \lk_\Delta(F)$, as $\Delta$ is weakly Cohen-Macaulay. For this 
    observe that $|L|{\setminus} |\lk_\Delta(F)|$ is non-empty.
    By Alexander duality for homology
    spheres~\cite[\S~72]{munkres_elements}, we get that 
    \[
        0 \  = \   \rH_{-1}(|L|{\setminus} |\lk_\Delta(F)|) \  = \  
        \rH_{k}(\lk_\Delta(F)).
    \]
    Alternatively, it is sufficient to show that $\lk_\Delta(F)$ is homotopic
    to a subcomplex of dimension $k-1$. To see this, note that $\lk_\Delta(F)$
    is a full-dimensional subcomplex of the $k$-dimensional homology manifold
    $L$. Thus, $\lk_\Delta(F)$ has a \emph{free face} and, using 
    Whitehead's language of cellular
    collapses~\cite{Whitehead}, $\lk_\Delta(F)$ collapses to a subcomplex of
    its $(k-1)$-skeleton. Since a collapse in particular provides a certificate for
    deformation retraction, this finishes the proof.
\end{proof}

\begin{proof}[Proof of Theorem~\ref{thm:atdvc}]
As a subset of $\R^d$, $|\K|$ is partitioned by the relative interiors of
faces $G \in \K$ and thus
\[
    \phi_{|\K|{\setminus} |B|}(n) \ = \ \sum_{G \in \K {\setminus} B}
     \phi_{\relint G}(n),
\]
For the case $n\neq 0$: as $\phi_{|\K|}(n) \ = \ \phi_{n|\K|}(1)$, is it is sufficient to prove the
claim for $n = -1$. From Theorem~\ref{thm:rec} and~\eqref{eqn:relint}, we get
\begin{align*}
    (-1)^d \phi_{|K|{\setminus}|B|}(-1)  
    &\  =  \  \sum_{G \in \K {\setminus} B}  (-1)^{d - \dim G}  \phi(-G)  \\
    &\  =  \  \sum_{G \in \K {\setminus} B} (-1)^{d - \dim G} \sum_{\sigma
    \subseteq G \text{ face}} \phi(\relint (-\sigma))  \\
    &\  =  \  \;\; \sum_{\sigma \in \K} W_\sigma\,\phi(\relint (-\sigma))
    \end{align*}
where for a face $\sigma \in \K$
\[
    W_\sigma \  := \  (-1)^d\sum_{\sigma \subseteq G \in \K {\setminus} B} (-1)^{\dim G}
    \  = \  (-1)^{d - \dim G}\bigl(\rEuler(\lk_\K(\sigma)) -
    \rEuler(\lk_B(\sigma)) \bigr)
\]
It follows from Lemma~\ref{lem:wCM_Eulerian}, that $W_\sigma = 1$ if $\sigma
\in \K {\setminus} D$ which proves the claim. The proof of the case $n=0$ is analogous.
\end{proof}

Since the boundary of every $\Lambda$-polytope is a sphere, we can extend the
validity of~\eqref{eqn:rd} to general $\Lambda$-valuations.
\begin{cor}
    Let $P \subset \R^d$ be a $\Lambda$-polytope. Let $B$ be the underlying space of
    a full-dimensional, weakly CM subcomplex and let
    $D$ be the closure of $\partial P {\setminus} B$.
    If $\phi$ is a $\Lambda$-valuation, then
    \[
    (-1)^{\dim P}\phi_{P {\setminus} B}(-n) \ = \ \phi_{-(P {\setminus} D)}(n)
    \ \ \text{for all}\ \ n\neq 0.
    \]
\end{cor}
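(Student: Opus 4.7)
The plan is to deduce the corollary as an immediate specialization of Theorem~\ref{thm:atdvc}. Let $\Delta$ be a full-dimensional, weakly Cohen-Macaulay subcomplex of $\partial P$ with $|\Delta|=B$, and let $\K$ be a $\Lambda$-polyhedral structure on $P$ in which $\Delta$ appears as a subcomplex of $\partial \K$. When $\Delta$ is already a subcomplex of the face complex $\B(P)$, one may simply take $\K$ to be that face complex; otherwise one extends a $\Lambda$-polyhedral decomposition of $\partial P$ containing $\Delta$ to a decomposition of $P$.

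Next I would verify the hypotheses of Theorem~\ref{thm:atdvc}. Since $P$ is homeomorphic to a closed $(\dim P)$-ball, $|\K|=P$ is a topological manifold with boundary $\partial P \cong S^{\dim P-1}$, and as every topological manifold is a homology manifold, $\K$ is a $(\dim P)$-dimensional homology manifold with boundary. By construction $\Delta \subseteq \partial \K$ is a full-dimensional weakly Cohen-Macaulay subcomplex, and the closure of $\partial \K \setminus \Delta$ has underlying space $D$, so all hypotheses of the main theorem are met with $d=\dim P$.

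Applying Theorem~\ref{thm:atdvc} then yields
\[
    (-1)^{\dim P}\phi_{P\setminus B}(-n) \ = \ \phi_{-(P\setminus D)}(n)
\]
for every integer $n \ge 1$. By Theorem~\ref{thm:rec}, both $\phi_{P\setminus B}(n)$ and $\phi_{-(P\setminus D)}(n)$ agree with polynomial functions of $n$ (each being the alternating sum over faces of the polynomials coming from McMullen's reciprocity), so the identity above is a polynomial identity and hence extends to every $n\ne 0$. The only mild technicality is the existence of the refined $\Lambda$-polyhedral structure on $P$ that realizes $\Delta$ on its boundary, but this is routine; beyond that, no real obstacle remains, since the corollary is essentially just the main theorem packaged for the ambient complex $\K=P$.
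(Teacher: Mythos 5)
Your proposal is correct and is essentially the same (one-line) argument the paper has in mind: since $P$ is a polytope, the complex $\K$ of all faces of $P$ is a homology manifold with boundary $\partial P \cong S^{\dim P-1}$, so Theorem~\ref{thm:atdvc} applies directly with $d = \dim P$. The paper does not spell out the extension from $n\ge1$ to $n\neq 0$, but your polynomiality observation (both $\phi_{P\setminus B}$ and $\phi_{-(P\setminus D)}$ are sums of polynomials by Theorem~\ref{thm:rec} and inclusion--exclusion, so agreement on $n\ge1$ forces agreement as polynomials) is exactly the right justification, and the technicality you flag about realizing $\Delta$ as a subcomplex of $\partial\K$ is a non-issue in the paper's intended setting, where $B$ is a union of facets of $P$ and $\K=\B(P)\cup\{P\}$.
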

This is indeed a generalization of~\eqref{eqn:rd}, as $\phi(S) = |S \cap
\Z^d|$ is invariant under automorphisms of the lattice $\Lambda = \Z^d$. We
give an example for a self-reciprocal domain, that is, $D = \mathrm{T}(B)
\subset \partial P$, where $\mathrm{T}$ is an automorphism of $\Lambda$ with
$T(P) = P$. 

\begin{example}
    Let $P = [0,1]^4 = P_1 \times P_2$ be the $4$-cube presented as the
    product of two squares $P_1 = P_2 = [0,1]^2$. The boundary of the $4$-cube
    contains a $2$-dimensional torus $T = \partial P_1 \times \partial P_2$,
    which decomposes $\partial P$ into two solid tori $S_1 = P_1 \times
    \partial P_2$ and $S_2 = \partial P_1 \times P_2$. As these are
    $3$-manifolds with boundary, both $S_1$ and $S_2$ are pure $3$-dimensional
    weakly Cohen-Macaulay subcomplexes. The Ehrhart function for a $k$-cube is
    $E_{[0,1]^k}(n) = (n+1)^k$. Thus the relative Ehrhart function is
    \[
        E_{P,S_1}(n) \ = \ (n+1)^4 - 4 (n+1)^3 + 4(n+1)^2  \ = \ n^4 - 2n^2 +
        1
    \]
    and $(-1)^4E_{P,S_1}(-n) = E_{P,S_2}(n) =
    E_{P,S_1}(n)$.\hfill$\diamond$
\end{example}

Towards a proof for Theorem~\ref{thm:MR1}, let us record the
following general lemma. For a polyhedral cone $C$ and a point $a \in C$, let
$\sigma_a \subset C$ be the unique face with $a \in \relint \sigma_a$.
\begin{lem}\label{lem:genF}
    Let $C \subset \R^{d+1}$ be a rational $(d+1)$-dimensional cone and $\Delta
    \subset \B(C)$ an arbitrary subcomplex. Then
    \[
    (-1)^{d+1} \LatEnu_{C {\setminus} |\Delta|}\left(\tfrac{1}{\x}\right)
    \ = \ \LatEnu_{\relint C}(\x) \ + \
    \sum_{a \in |\Delta| \cap \Z^{d+1}} (-1)^{d - \dim \sigma_a }\,
    \rEuler(\lk_\Delta(\sigma_a))\x^a.
    \]
\end{lem}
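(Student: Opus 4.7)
The plan is to decompose both sides of the claimed identity over relatively open faces of $C$, apply Stanley's reciprocity for rational pointed cones to each piece, and then match coefficients. The starting point is the partition
\[
C \setminus |\Delta| \ = \ \relint C \ \sqcup \bigsqcup_{F \in \B(C) \setminus \Delta,\ F \neq \emptyset} \relint F,
\]
which holds because a lattice point $a \in C$ belongs to $|\Delta|$ exactly when the face $\sigma_a$ of $C$ containing $a$ in its relative interior lies in $\Delta$. Taking lattice-point enumerators, substituting $\x \mapsto 1/\x$, invoking Stanley reciprocity $\LatEnu_{\relint \sigma}(1/\x) = (-1)^{\dim \sigma}\LatEnu_\sigma(\x)$ on each face of the pointed rational cone $C$ (including $\sigma = C$), and multiplying through by $(-1)^{d+1}$, the left-hand side rewrites as
\[
\LatEnu_C(\x) \ + \sum_{F \in \B(C) \setminus \Delta,\ F \neq \emptyset}(-1)^{d+1+\dim F}\,\LatEnu_F(\x).
\]

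Next I would re-expand each closed-cone enumerator as a sum over its relatively open faces, $\LatEnu_\sigma(\x) = \sum_{\emptyset \neq G \leq \sigma}\LatEnu_{\relint G}(\x)$, and interchange the order of summation so that terms are grouped by the inner face $G$. The coefficient of $\LatEnu_{\relint C}(\x)$ on the right is then $1$, which produces the leading $\LatEnu_{\relint C}(\x)$ in the target. For each non-empty proper face $G \in \B(C)$ the coefficient of $\LatEnu_{\relint G}(\x)$ works out to
\[
W_G \ := \ 1 + \sum_{F \in \B(C) \setminus \Delta,\ F \supseteq G}(-1)^{d+1+\dim F},
\]
so the lemma reduces to verifying $W_G = 0$ when $G \notin \Delta$ and $W_G = (-1)^{d-\dim G}\rEuler(\lk_\Delta(G))$ when $G \in \Delta$.

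The final step is the evaluation of $W_G$. The face lattice of the pointed cone $C$ is Eulerian, so $\sum_{F : G \leq F \leq C}(-1)^{\dim F} = 0$ for any proper face $G$; isolating the $F = C$ summand gives $\sum_{F \in \B(C),\, F \supseteq G}(-1)^{d+1+\dim F} = -1$, which cancels the $+1$ in $W_G$ and leaves $W_G = -\sum_{F \in \Delta,\, F \supseteq G}(-1)^{d+1+\dim F}$. When $G \notin \Delta$ this sum is empty and $W_G = 0$. When $G \in \Delta$ the faces $F \in \Delta$ containing $G$ are parametrized by the faces $H \in \lk_\Delta(G)$, with the empty face of the link corresponding to $F = G$ (using $\dim \emptyset = -1$, so that $\dim F = \dim G + \dim H + 1$ in all cases), and a short sign computation identifies the sum with $(-1)^{d-\dim G}\rEuler(\lk_\Delta(G))$. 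I expect the trickiest part to be this final identification: one has to be careful that the correspondence $F \leftrightarrow H$ still makes sense in the polyhedral (as opposed to purely simplicial) setting, and one must correctly track the contribution of the empty face $\emptyset \in \lk_\Delta(G)$ to the reduced Euler characteristic.
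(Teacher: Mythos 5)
Your argument and the paper's are the same in essence: partition into relatively open faces, apply Stanley reciprocity termwise, and identify the resulting coefficients with Euler characteristics of links. The one structural difference is which side of the complement you sum over. You write $C\setminus|\Delta|=\relint C\sqcup\bigsqcup_{F\in\B(C)\setminus\Delta}\relint F$, which forces an extra pass through the Euler relation on the intervals $[G,C]$ to convert a sum over $\B(C)\setminus\Delta$ into one over $\Delta$; the paper instead writes $\LatEnu_{C\setminus|\Delta|}=\LatEnu_C-\sum_{G\in\Delta}\LatEnu_{\relint G}$, never leaves the $\Delta$-side, and lands directly on the coefficient $\sum_{\sigma_a\subseteq G\in\Delta}(-1)^{d-\dim G}$ without any Eulerian-poset input. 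So your route is a slight detour, but it reaches the same final identification, and your instinct that this last step is the delicate one is correct --- do not defer it. If you actually carry out the ``short sign computation'' with the parametrization you state, $\dim F=\dim G+\dim H+1$ and $\dim\emptyset=-1$, you get $W_G=-(-1)^{d+\dim G}\rEuler(\lk_\Delta(G))$, which is $(-1)$ times the $(-1)^{d-\dim G}\rEuler(\lk_\Delta(G))$ the lemma asks for. This discrepancy is a convention issue the paper glosses over as well (its own coefficient extraction introduces a $(-1)^{d+1}$ where the preceding display yields $(-1)^{d}$): every nonempty face of a pointed cone contains the apex, so the paper's set-theoretic definition of $\lk$ degenerates on $\B(C)$, and both $\dim\sigma_a$ and $\lk_\Delta(\sigma_a)$ in the statement are really meant to be read in a cross-sectional polytope of $C$, one dimension lower, which supplies the extra sign. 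Pin that identification down explicitly (equivalently, replace cone dimensions by cross-section dimensions throughout) before closing the proof; your caveat about the polyhedral-versus-simplicial correspondence and the bookkeeping of the empty face is pointing at exactly this.
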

Notice that $\lk_\Delta(\sigma) \subseteq \lk_{\B(C)}(\sigma) \cong S^{d
 - \dim \sigma}$. Thus, the coefficient of $\x^a$ in the equation above is
the Euler characteristic of the \emph{Alexander dual} of
$|\lk_\Delta(\sigma_a)| \subset S^{d  - \dim \sigma_a}$.

\begin{proof}
    From Ehrhart theory (cf.~\cite[Prop.~7.1]{stanley74}), we have for a rational cone $G$
    \[
    (-1)^{\dim G} \LatEnu_G\left(\tfrac{1}{\x}\right) \ = \ \LatEnu_{\relint G}(\x).
    \]
    Thus, from
      \[
        \LatEnu_{C{\setminus}|\Delta|}(\x) \ = \ \LatEnu_C(\x) \ - \ 
        \sum_{G \in \Delta} \LatEnu_{\relint G}(\x)
    \]
    we obtain
    \[
    (-1)^{d+1} \LatEnu_{C {\setminus} |\Delta|}\left(\tfrac{1}{\x}\right)
    \ = \ \LatEnu_{\relint C}(\x) \ + \
    \sum_{G \in \Delta} (-1)^{d - \dim G} \LatEnu_G(\x)
    \]
    which shows that the right-hand side is supported on $\relint(C) \cup
    |\Delta|$.  Now for $a \in |\Delta| \cap \Z^{d+1}$, the coefficient of
    $\x^a$ on the right-hand side is 
    \[
    (-1)^{d+1}
    \sum_{\sigma_a \subseteq G \in \Delta} (-1)^{\dim G} \ = \
    (-1)^{d-\dim \sigma_a }
     \sum_{\overline{G} \in \lk_\Delta(\sigma_a)} (-1)^{\dim \overline{G}} 
     \ = \ (-1)^{d-\dim \sigma_a }\rEuler(\lk_\Delta(\sigma_a)),
    \]
    which proves the claim.
\end{proof}

\begin{proof}[Proof of Theorem~\ref{thm:MR1}]
    If $\Delta$ is Cohen-Macaulay, then for every face $F \in \Delta$, the
    link  $\lk_\Delta(F)$ has the reduced Euler characteristic of a $(d-1-\dim
    F)$-sphere if $F$ is interior and the reduced Euler characteristic of a
    point otherwise. Together with Lemma~\ref{lem:genF} this gives us
    \[
    (-1)^{d+1}\LatEnu_{C {\setminus} |\Delta|}\left(\tfrac{1}{\x}\right) \ = \ \LatEnu_{\relint
    C}(\x) \ + \ \sum_{a \in (|\Delta|{\setminus} |\Delta^\prime|)\cap \Z^{d+1}}
    \x^a \qedhere
    \]
\end{proof}

\section{A relative Brion theorem}
\label{sec:rel_brion}

In this section we give a version of Brion's theorem~\cite{brion} (see
also~\cite{bhs}) suitable in the presence of a \emph{forbidden} subcomplex. To
make our results more transparent, let us start with the classical Brion-Gram
relation for polytopes and an interesting complementary version. For
a subset $S \subseteq \R^d$, let us denote by $[S] : \R^d \rightarrow \{0,1\}$
the indicator function. Note, that $[S\cap T] = [S] \cdot [T]$.

Let $C = \{ x \in \R^{d+1} : \langle a_i, x\rangle \le 0 \text{ for } i =
1,2,\dots, m \}$ be a polyhedral cone. For a non-empty face $F \subseteq C$
let $I(F) = \{ i \in [m] : \langle a_i, x \rangle = 0 \text{ for all } x \in
F\}$ and
define the \Defn{tangent cone} of $C$ at $F$ as 
\[
T_C(F)  \ := \ \{ x \in \R^d : \langle a_i, x \rangle \le 0 \text{ for }
i \in I(F) \}
\]

\begin{lem}\label{lem:coneBG}
    Let $C \subset \R^{d+1}$ be a full-dimensional polyhedral cone.  Then
    \[
        \sum_{\emptyset \neq F \subseteq C} (-1)^{\dim F} [T_C(F)] \ = \
        (-1)^{d+1} [\Int(-C)]
    \]
\end{lem}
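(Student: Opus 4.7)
The plan is to evaluate both sides pointwise at $y \in \R^{d+1}$. With $J(y) := \{i : \langle a_i, y\rangle \le 0\}$, the condition $y \in T_C(F)$ translates to $I(F) \subseteq J(y)$, so the left-hand side becomes
$$
S(y) \; := \; \sum_{F \neq \emptyset,\, I(F) \subseteq J(y)} (-1)^{\dim F}.
$$
Letting $\Delta_y$ denote the subcomplex of $\partial C$ generated by the facets $F_{\{i\}}$ with $i \notin J(y)$, the faces $F$ not appearing in the sum above are exactly those contained in $|\Delta_y|$, and splitting yields
$$
S(y) \; = \; \sum_{\emptyset \neq F \subseteq C} (-1)^{\dim F} \;-\; \sum_{\emptyset \neq F \subseteq \Delta_y} (-1)^{\dim F}.
$$
A direct $f$-vector calculation, using that the proper faces of a pointed full-dimensional $(d+1)$-cone $C$ correspond to the faces of the polytopal $(d-1)$-sphere given by the link of $\{0\}$ in $\partial C$, shows that the first sum vanishes. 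Hence $S(y) = -\sum_{\emptyset \neq F \subseteq \Delta_y} (-1)^{\dim F}$.

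I would then handle three cases. If $J(y) = \{1, \dots, m\}$, i.e.\ $y \in C$, then $\Delta_y$ is empty and $S(y) = 0$, matching the right-hand side. If $J(y) = \emptyset$, i.e.\ $y \in \Int(-C)$, then $\Delta_y = \partial C$ and the analogous $f$-vector calculation gives $\sum_{\emptyset \neq F \subseteq \partial C} (-1)^{\dim F} = (-1)^d$, so $S(y) = (-1)^{d+1}$, again matching. Otherwise $|\Delta_y|$ is a proper nonempty subcomplex of $\partial C$; writing it as a cone with apex $0$ over its spherical slice $\bar\Delta_y := |\Delta_y| \cap S^d$ (a proper nonempty subcomplex of the link $(d-1)$-sphere) yields $\sum_{\emptyset \neq F \subseteq \Delta_y} (-1)^{\dim F} = 1 - \chi(\bar\Delta_y)$, so the identity reduces to $\chi(\bar\Delta_y) = 1$.

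This last step is essentially the classical visibility theorem for polytopes: $\bar\Delta_y$ is precisely the subcomplex of the polytopal link sphere generated by those spherical facets $F_{\{i\}} \cap S^d$ for which $y$ lies on the outward side of the facet hyperplane, i.e.\ with $\langle a_i, y\rangle > 0$, and any proper nonempty such ``bright side'' is a topological $(d-1)$-ball, hence has Euler characteristic $1$. The main technical obstacle is the handling of degenerate positions of $y$, where $\langle a_i, y\rangle = 0$ for some $i$ and the visibility theorem as usually phrased does not directly apply; I would resolve this by a small perturbation $y'' = y - \epsilon y_0$ with $y_0$ chosen so that $\langle a_i, y_0\rangle > 0$ for every $i$ with $\langle a_i, y\rangle = 0$ (feasible because the facet normals of a pointed cone generate a pointed cone), which preserves the bright subcomplex while placing the viewpoint into general position.
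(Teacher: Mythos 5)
Your proof is correct in outline and takes a genuinely different route from the paper's. The paper, after noting that $[T_C(F)](p) \neq 0 \Leftrightarrow I(F) \subseteq J(p)$, directly identifies the resulting alternating sum with the Euler characteristic of the relaxed cone $C_J = \{x : \langle a_i, x\rangle \le 0,\ i \in J\}$, and observes that $\chi(C_J) = 0$ since $C_J$ is the product of a linear space with a nontrivial pointed cone whenever $J \neq \emptyset$. That is a one-paragraph argument with no reference to visibility. You instead split off the subcomplex $\Delta_y$ of facets violated by $y$, kill the full alternating sum by the Euler relation, and reduce to showing $\chi(\bar\Delta_y) = 1$ via the polytope visibility theorem. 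Your approach is longer and more case-heavy, but it makes the role of the visibility/bright-side picture (which is the whole theme of the paper) explicit, and it avoids the slightly delicate dimension-preserving correspondence between faces of $C_J$ and faces $F$ of $C$ with $I(F) \subseteq J$ that the paper invokes informally (``sending $p$ to infinity'').

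There is one real gap you should close. You invoke ``the classical visibility theorem for polytopes,'' but $\bar\Delta_y$ is determined by a \emph{direction} $y \in \R^{d+1}$ relative to a cone, not by an affine viewpoint outside a polytope. To reduce to the polytope statement one must dehomogenize: choose a linear functional $c$ with $\langle c, x\rangle > 0$ on $C \setminus \{0\}$ \emph{and} $\langle c, y\rangle > 0$, set $Q = C \cap \{c = 1\}$ and $\bar y = y/\langle c, y\rangle$; then $\langle a_i, y\rangle > 0 \Leftrightarrow \bar y$ lies beyond the $i$-th facet hyperplane of $Q$, so $\bar\Delta_y$ is literally the set of facets of $Q$ visible from $\bar y \notin Q$. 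Such a $c$ exists precisely when $y \notin -C$; if $c$ were chosen with $\langle c, y\rangle < 0$ (or $= 0$) the dictionary would instead match $\bar\Delta_y$ with the \emph{invisible} facets (or the facets facing a direction), which also happen to form balls but which you did not discuss. Your perturbation $y'' = y - \epsilon y_0$ does in fact push $y$ off $\partial(-C)$ (since afterwards $I^0 = \emptyset$ while $I^+, I^- \neq \emptyset$, hence $y'' \notin C \cup -C$), so the good dehomogenization always exists after perturbing — but this is the step that actually makes the visibility theorem applicable, and it deserves to be stated. With that addition your argument is complete.
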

\begin{proof}
    If $p \in \Int(-C)$, then $p \in T_C(F)$ if and only if $F = C$.
    For $p \in \R^{d+1} {\setminus} \Int(-C)$, let $J = \{ i \in [m] :
    \langle a_i, p \rangle \le 0 \}$. Then
    \[
        C_J \ := \ \{ x \in \R^{d+1} : \langle a_i, x \rangle \le 0 \text{ for
        } i \in J \}
    \]
    is the product of a linear space and a pointed polyhedral cone and thus
    has Euler characteristic $\chi(C_J) = 0$.  Moreover, by sending the point
    $p \in C_J$ to infinity, the faces of $C_J$ are exactly those faces $F
    \subseteq C$ for which $p \in T_C(F)$ and the left-hand side of the stated
    equation computes the Euler characteristic of $C_J$.
\end{proof}

From that, we can deduce the usual Brianchon-Gram relation. If $P \subset
\R^d$ is polytope and $F$ a face, then the tangent cone of $P$ at $F$ is
defined analogously as above and, equivalently, $T_P(F) \ = \ q_F + \cone(P
- q_F) $, where $q_F \in \relint F$. In particular, we have $T_P(P) = \R^d$
and $T_P(\emptyset) = P$.

\begin{cor}\label{cor:BG}
    If $P \subset \R^d$ is a polytope, then
    \[
        [P] \ = \ \sum_{\emptyset \neq F \subseteq P} (-1)^{\dim F} [T_P(F)].
    \]
\end{cor}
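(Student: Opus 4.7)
The plan is to homogenize $P$ into a cone and apply Lemma~\ref{lem:coneBG}. After reducing to the full-dimensional case by working inside $\aff P$ if necessary, I would form the cone $C := \cone(P \times \{1\}) \subset \R^{d+1}$. Its nonempty faces are precisely the apex $\{0\}$ and the lifts $\widetilde F := \cone(F \times \{1\})$ of the nonempty faces $F \subseteq P$, with $\dim \widetilde F = \dim F + 1$. The strategy is to apply Lemma~\ref{lem:coneBG} to $C$ and then restrict the resulting indicator-function identity to the affine hyperplane $H = \{x_{d+1} = 1\}$.

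Applying the lemma and separating the apex contribution (for which $T_C(\{0\}) = C$) from the rest, I would obtain
\[
[C] \ + \ \sum_{\emptyset \neq F \subseteq P} (-1)^{\dim F + 1}\,[T_C(\widetilde F)] \ = \ (-1)^{d+1}\,[\Int(-C)].
\]
Restricting to $H$ should produce the desired identity: one has $C \cap H = P$, while $\Int(-C) \subseteq \{x_{d+1} < 0\}$ contributes nothing on $H$. The key point is the identification $T_C(\widetilde F) \cap H = T_P(F)$ for every nonempty face $F$ of $P$. Writing $P = \{x : \langle a_i, x\rangle \le b_i\}$, the cone $C$ is cut out by $x_{d+1} \ge 0$ together with $\langle a_i, x\rangle \le b_i x_{d+1}$, and at a point of $\relint \widetilde F$ the tight constraints are precisely those indexed by $I(F)$; hence $T_C(\widetilde F) = \{(x,t) : \langle a_i, x\rangle \le b_i t,\ i \in I(F)\}$, which slices at $t = 1$ to $T_P(F)$.

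Feeding these identifications into the displayed equation yields
\[
[P] \ - \ \sum_{\emptyset \neq F \subseteq P} (-1)^{\dim F}\,[T_P(F)] \ = \ 0,
\]
which is the stated corollary after rearrangement. The only step requiring any real care is the tangent-cone identification under slicing, and even that is essentially automatic from the facet descriptions of $P$ and $C$, so I do not expect a substantive obstacle.
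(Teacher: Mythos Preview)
Your proposal is correct and follows essentially the same approach as the paper: homogenize $P$ to the cone $C(P)$, apply Lemma~\ref{lem:coneBG}, and restrict to the hyperplane $H = \R^d \times \{1\}$, using the face correspondence $\widetilde F \leftrightarrow F$ together with $T_C(\widetilde F)\cap H = T_P(F)$ and $\Int(-C)\cap H = \emptyset$. Your explicit reduction to the full-dimensional case and the facet-description verification of the tangent-cone identification are small clarifications the paper leaves implicit, but the argument is otherwise the same.
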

\begin{proof}
    Let $C = C(P) \subset \R^{d+1}$ be cone associated to $P$. Let $H = \R^d
    \times \{1\}$. Then the $(k+1)$-faces $\hat F$ of $C$ bijectively
    correspond to $k$-faces under $F = \hat F \cap H$. With the
    appropriate identifications, $[P] = [C]\cdot [H]$ and, in particular, 
    $[T_P(F)] = [T_C(\hat F)]\cdot [H]$. Since $H \cap \Int(-C) = \emptyset$,
    we get from Lemma~\ref{lem:coneBG} 
    \[
        [P] \ = \ [H] \cdot [T_C(0)] \ = \ [H]
        \sum_{\{0\} \subsetneq \hat F \subseteq C} (-1)^{\dim \hat F-1}
        [T_C(\hat F)] 
        \ = \ \sum_{\emptyset \neq F \subseteq P} (-1)^{\dim F} [T_P(F)],%. \qedhere
    \]
    which proves the claim.
\end{proof}

We also get an interesting  complementary version as follows. For every face
$F \subseteq P$, the tangent cone is of the form $T_P(F) = \aff(F) + C_P(F)$
where $C_P(F)$ is the unique cone contained in $\aff(F)^\perp$. Let us define
the \Defn{inverted tangent cone} as $T^{-1}_P(F) = \aff(F) - C_P(F)$.

\begin{cor}\label{cor:invBG}
    Let $P \subset \R^d$ be a full-dimensional polytope. Then
    \[
        (-1)^d [\relint(P)] \ = \ \
        \sum_{\emptyset \neq F \subseteq P} (-1)^{\dim F} [T^{-1}_P(F)].
    \]
\end{cor}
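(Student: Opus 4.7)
The plan is to mimic the proof of Corollary \ref{cor:BG} almost verbatim, except that I intersect the identity of Lemma \ref{lem:coneBG} with the ``mirror'' hyperplane $-H = \R^d \times \{-1\}$ in place of $H = \R^d \times \{1\}$. Let $C = C(P) = \cone(P \times \{1\})$ and fix a facet description $P = \{x \in \R^d : \langle a_i, x\rangle \le b_i \text{ for } i = 1,\dots,m\}$. As in Corollary~\ref{cor:BG}, the non-empty faces $\hat F \supsetneq \{0\}$ of $C$ biject with non-empty faces $F \subseteq P$ via $\hat F = \cone(F \times \{1\})$, with $\dim \hat F = \dim F + 1$ and $I(\hat F) = I(F)$.

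The heart of the argument is a single tangent-cone identity: under the identification $(x,-1) \leftrightarrow x$ of $-H$ with $\R^d$, I claim
\[
   T_C(\hat F) \cap (-H) \ = \ -T^{-1}_P(F).
\]
This follows from the facet description $T_C(\hat F) = \{(x,t) : \langle a_i, x\rangle \le b_i t,\ i \in I(F)\}$: setting $t = -1$ yields $\{x : \langle a_i, x\rangle \le -b_i,\ i \in I(F)\}$, which equals $-\{y : \langle a_i, y\rangle \ge b_i,\ i \in I(F)\}$. The latter set is $T^{-1}_P(F)$: reflection of $\R^d$ across $\aff(F)$ fixes $\aff(F)$ pointwise and sends $C_P(F) \subset \aff(F)^\perp$ to $-C_P(F)$, so it carries $T_P(F) = \aff(F) + C_P(F)$ to $\aff(F) - C_P(F) = T^{-1}_P(F)$; in facet terms each inequality $\langle a_i, x\rangle \le b_i$ for $i \in I(F)$ is reversed to $\langle a_i, x\rangle \ge b_i$, giving the claimed description.

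With this in hand I evaluate Lemma \ref{lem:coneBG} pointwise on $-H$. The term $\hat F = \{0\}$ contributes nothing since $T_C(\{0\}) \cap (-H) = C \cap (-H) = \emptyset$; each non-trivial $\hat F = \cone(F \times \{1\})$ contributes $(-1)^{\dim F + 1}[-T^{-1}_P(F)]$; and $\Int(-C) \cap (-H)$ is identified with $-\relint P$. Collecting signs gives
\[
   \sum_{\emptyset \neq F \subseteq P}(-1)^{\dim F}[-T^{-1}_P(F)] \ = \ (-1)^d [-\relint P],
\]
and then applying the involution $x \mapsto -x$ (using $[-S](x) = [S](-x)$) produces the corollary. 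The only real obstacle is the tangent-cone identity above; once it is in place, the remainder is strictly parallel to the proof of Corollary~\ref{cor:BG}.
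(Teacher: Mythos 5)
Your proof is correct and is essentially the paper's own argument. The paper lifts $-P$ to the cone $C(-P)$ and slices Lemma~\ref{lem:coneBG} with $H=\R^d\times\{-1\}$ so that the identifications $\relint(-C)\cap H=\relint(P)$ and $T_C(\hat F)\cap H=T^{-1}_P(F)$ come out on the nose, whereas you lift $P$ itself, slice with $\R^d\times\{-1\}$, and clear the resulting $-T^{-1}_P(F)$ and $-\relint P$ with a final substitution $x\mapsto -x$ --- the same computation up to a harmless reparametrization.
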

\begin{proof}
    Let $P = \{ x : \langle a_i, x\rangle \le b_i \text{ for } i \in [m]\}$.
    For a non-empty face $F \subseteq P$, the inverted tangent cone is
    given by
    \[
    T^{-1}_P(F) \ = \ \{ x \in \R^d : \langle a_i, x \rangle -b_i \ge 0 \text{
    for } i \in I(F)\}.
    \]
    Now consider $C = C(-P) = \{ (x,t) : t \ge 0, \langle -a_i,x\rangle - b_i t \le 0,
    i \in [m] \}$ and $H = \R^d \times \{-1\}$. Then, with appropriate
    identifications, $\relint(-C) \cap H = \relint(P)$ and $T_C(\hat F) \cap H
    = T^{-1}_P(F)$.  Lemma~\ref{lem:coneBG} now yields the result.
\end{proof}

For dealing with forbidden subcomplexes, we will also need the following
relative versions of the two Brianchon-Gram relations.
If $\Delta \subseteq \B(P)$ is a full-dimensional subcomplex of the boundary,
then this induces a subcomplex $\Delta_F \subseteq \B(T_P(F))$ in the tangent
cone of every face $F \subsetneq P$. This subcomplex is pure of dimension
$d-1$ or empty.  We write $T_{P,\Delta}(F) = T_P(F) {\setminus} |\Delta_F|$ for
the tangent cone minus the faces induced by $\Delta$, and
$T^{-1}_{P,\Delta}(F)$ for the analogously defined relative inverted tangent
cone.

\begin{lem}\label{lem:rel_BG}
    Let $P \subset \R^d$ be a $d$-polytope and $\Delta \subseteq \B(P)$ a 
    full-dimensional subcomplex. Let $\Delta^\prime \subseteq \B(P)$ be
    the subcomplex spanned by the facets not contained in $\Delta$. 
    Then
    \[
    [P {\setminus} |\Delta|] \  = \  \sum_{F} (-1)^{\dim F}
    [T_{P,\Delta}(F)]
    \]
    and
    \[
    (-1)^{\dim P} [P {\setminus} |\Delta^\prime|] \  = \  \sum_{F} (-1)^{\dim F}
    [T^{-1}_{P,\Delta}(F)]
    \]
    where the sums are over all non-empty faces $F \subseteq P$.
\end{lem}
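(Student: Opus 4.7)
The plan is to derive both identities from the absolute Brianchon--Gram relations of Corollaries~\ref{cor:BG} and~\ref{cor:invBG} via an inclusion--exclusion argument that ties $|\Delta|$ to the tangent-cone subcomplexes $|\Delta_F|$ and $|\Delta^{-1}_F|$.

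For the first identity, since $|\Delta_F|$ lies on the boundary of $T_P(F)$ I have $[T_{P,\Delta}(F)] = [T_P(F)] - [|\Delta_F|]$; multiplying by $(-1)^{\dim F}$, summing over $F$, and invoking Corollary~\ref{cor:BG} reduces the claim to the combinatorial identity
\[
 \sum_F (-1)^{\dim F}\,[|\Delta_F|] \ = \ [|\Delta|].
\]
Let $\mathcal{F}_\Delta(F)$ denote the facets of $\Delta$ containing $F$ and set $F_J = \bigcap_{j\in J} F_j$, with $J$ ranging over non-empty subsets of the facets of $\Delta$. I would expand both sides by inclusion--exclusion on these facets: from $|\Delta_F| = \bigcup_{F_j \in \mathcal{F}_\Delta(F)} T_{F_j}(F)$ one gets $[|\Delta_F|] = \sum_{\emptyset \neq J \subseteq \mathcal{F}_\Delta(F)}(-1)^{|J|+1}[T_{F_J}(F)]$, while $[|\Delta|] = \sum_{J \neq \emptyset}(-1)^{|J|+1}[F_J]$. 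Applying Corollary~\ref{cor:BG} to each polytope $F_J$ in its own affine hull further expands $[F_J] = \sum_{\emptyset \neq F \subseteq F_J}(-1)^{\dim F}[T_{F_J}(F)]$, and after swapping summation order the two double sums agree term by term.

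For the second identity I would run the exact same template with Corollary~\ref{cor:invBG} in place of Corollary~\ref{cor:BG}. Writing $[T^{-1}_{P,\Delta}(F)] = [T^{-1}_P(F)] - [|\Delta^{-1}_F|]$, summing, and applying the inverted Brianchon--Gram relation to each $F_J$ gives
\[
 \sum_F (-1)^{\dim F}\,[T^{-1}_{P,\Delta}(F)] \ = \ (-1)^d\,[\relint P] \ + \sum_{J \neq \emptyset}(-1)^{|J|+\dim F_J}\,[\relint F_J].
\]
Grouping the second sum by $G = F_J$, the coefficient of $[\relint G]$ becomes an alternating sum over subsets $J \subseteq \mathcal{F}_\Delta(G)$ with $\bigcap_{j \in J} F_j = G$. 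By the Nerve Theorem applied to the cover of $|\lk_\Delta(G)|$ by its facets inside the sphere $\lk_{\B(P)}(G)$, this coefficient computes $\rEuler(\lk_\Delta(G))$, which by Lemma~\ref{lem:wCM_Eulerian} evaluates to $(-1)^{d - \dim G}$ at interior faces of $\Delta$ and $0$ at boundary faces---precisely matching the expansion $(-1)^d[P \setminus |\Delta'|] = (-1)^d[\relint P] + (-1)^d\sum_{G \in \Delta \setminus \Delta'}[\relint G]$ term by term.

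The main obstacle will lie in the second identity: identifying the alternating sum over ``carrying'' subsets $J$ for $G = F_J$ with $\rEuler(\lk_\Delta(G))$ via the Nerve Theorem, and then invoking the sphere structure of $\lk_{\B(P)}(G)$ together with Lemma~\ref{lem:wCM_Eulerian} to produce the vanishing at boundary faces of $\Delta$.
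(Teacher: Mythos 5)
Your approach is genuinely different from the paper's. The paper proves both identities by evaluating the two sides pointwise: fix $p\in\R^d$ and, depending on whether $p$ lies off all affine hulls of facets of $\Delta$, in $P$ on the affine hull of a face, or outside $P$, reduce the evaluation to Euler-characteristic computations (a contractible complex versus a closed star, or an application of Brianchon--Gram on the small polytope $F_I$). Your argument instead manipulates the indicator functions globally: split $[T_{P,\Delta}(F)]=[T_P(F)]-[|\Delta_F|]$, run inclusion--exclusion over the facets of $\Delta$, and apply the absolute Brianchon--Gram relations face by face. For the first identity this works (the key technical fact you tacitly use, that $\bigcap_{j\in J}T_{F_j}(F)=T_{F_J}(F)$ for facets $F_j$ of $\Delta$ containing $F$, does hold since $T_{F_j}(F)=\aff(F_j)\cap T_P(F)$, and the resulting swap of sums plus Corollary~\ref{cor:BG} applied to $F_J$ closes the loop). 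Also, the Nerve-theorem computation you sketch is correct: $\sum_{J:\,F_J=G}(-1)^{|J|}=\rEuler(\lk_\Delta(G))$ does follow from applying the nerve lemma to the facet cover of $|\lk_\Delta(G)|$.

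The substantive deviation is in your second identity. You invoke Lemma~\ref{lem:wCM_Eulerian} to conclude $\rEuler(\lk_\Delta(G))=(-1)^{d-\dim G-2}$ for interior faces and $0$ for boundary faces. But Lemma~\ref{lem:wCM_Eulerian} requires $\Delta$ to be weakly Cohen--Macaulay, and Lemma~\ref{lem:rel_BG} does not carry that hypothesis -- it is stated for an arbitrary full-dimensional subcomplex $\Delta\subseteq\B(P)$, and the paper's proof of the first identity genuinely works at that generality. So as written, your proof of the second identity establishes a weaker statement than what is claimed. (As a side remark, it turns out the coefficient of $[\relint G]$ in the expansion $\sum_F(-1)^{\dim F}[T^{-1}_{P,\Delta}(F)]$ really is $(-1)^{\dim G}\rEuler(\lk_\Delta(G))$, which need not vanish for boundary faces of an arbitrary $\Delta$ -- e.g.\ two non-adjacent triangles in a square pyramid, with $G$ the apex, give $\rEuler=1$ -- so your detection of this dependence is mathematically apt; but you present the appeal to Lemma~\ref{lem:wCM_Eulerian} as if it were available for free, when in fact it requires strengthening the hypothesis of the lemma you are trying to prove.) If you want your proof to match the stated generality, you must either (a) remove the reliance on weak Cohen--Macaulayness, which your route cannot do, or (b) flag explicitly that you are proving the second identity only under the weakly CM assumption, which is the regime in which it is actually used in the relative Brion theorem.
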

\begin{proof}
    We prove only the first statement as the proof of the second relation is
    analogous.
    Let $p \in \R^d$ be an arbitrary point. If $p$ is not contained in the
    affine span of any face of $\Delta$, then $[{T_{P,\Delta}(F)}](p) =
    [T_P(F)](p)$ for all non-empty faces $F \subseteq P$ and the
    identity is Corollary~\ref{cor:BG}.  Thus, suppose that $p$ is
    contained in some hyperplane spanned by a facet in~$\Delta$.

    If $p \in P$, then the unique face $F \subseteq P$ containing $p$ in
    the relative interior is a face of $\Delta$. In this case
    $p \in T_{P,\Delta}(G)$ if and only if $G$ and $F$ are contained in
    a common face of $\Delta$. That is, if $D$ is contained in the
    \emph{closed star}
    $
        \st_\Delta(F) \ := \ \{ G \in \Delta: F \cup G \subseteq K \in \Delta
        \}
    $ of $F$ in $\Delta$.
    The right-hand side of the stated equation evaluated at $p$
    can be written as 
    \[
        \sum_{F \in \B(P){\setminus}\{\emptyset\}} (-1)^{\dim F} \ - \ \sum_{D
        \in \st_\Delta(F){\setminus} \{\emptyset\}} (-1)^{\dim D}.
    \]
    This is the difference of the unreduced Euler characteristics of two
    contractible complexes and therefore $0 = 1-1$.

    If $p \in \R^d {\setminus} P$, let $F_1,\dots, F_k \subseteq P$ be the
    $(d-1)$-dimensional faces of $\Delta$ for which $p$ is contained in the
    affine hyperplane $H_i := \aff(F_i)$  spanned by $F_i$. We have to show
    that 
    \begin{equation}\label{eqn:rel_BG}
        \sum\{(-1)^{\dim G} : p \in T_P(G) \text{ and } G \subseteq F_i \text{
        for some } i =1,\dots, k \}  \ = \ 0,
    \end{equation}
    as this is the collection of terms missing from the usual Brianchon-Gram.
    For $I \subseteq [k]$, let $F_I = \cap_{i \in I} F_i$ and define
    \[
        s_I \ := \ \sum\{(-1)^{\dim G} : p \in T_P(G) \text{ and } G \subseteq
        F_I \}.
    \]
    We can rewrite the left-hand side of~\eqref{eqn:rel_BG} as
    \[
        \sum_{\emptyset \neq I \subseteq [k]} (-1)^{|I|-1} s_I.
    \]
    But for a fixed $I$, we have that $s_I$ is equal to the left-hand side of
    the Brianchon-Gram relation applied to $F_I$ and a point $p \not\in F_I$
    inside $\aff(F_I)$. Thus $s_I = 0$.
\end{proof}

We can now state our generalization of Brion's theorem.

\begin{thm}[Relative Brion's theorem]
    Let $P \subset \R^d$ be a full-dimensional polytope with vertices $v_1,
    v_2, \dots, v_n \in \Z^d$.  Let $\Delta \subseteq \B(P)$ a pure and
    $d$-dimensional weakly
    Cohen-Macaulay subcomplex and let $\Delta^\prime \subseteq \B(P)$ be the
    subcomplex generated by the facets of $P$ not contained in $\Delta$.  Then
    \[
    \LatEnu_{P {\setminus} |\Delta|}(\x) \ = \ 
    \LatEnu_{T_{P,\Delta}(v_1)}(\x) \ + \ 
    \LatEnu_{T_{P,\Delta}(v_2)}(\x) \ + \ 
    \cdots \ + \ 
    \LatEnu_{T_{P,\Delta}(v_n)}(\x)
    \]
    and
    \[
    (-1)^d\LatEnu_{P {\setminus} |\Delta|}(\tfrac{1}{\x}) \ = \ \LatEnu_{-(P
    {\setminus} |\Delta^\prime|)}(\x).
    \]
\end{thm}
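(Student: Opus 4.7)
Both identities will be deduced from the relative Brianchon-Gram relations of Lemma~\ref{lem:rel_BG} by applying the valuation $\LatEnu$ and using that the lattice-point generating function of any set invariant under translation by a lattice vector vanishes as a rational function.

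For the first identity, apply $\LatEnu$ to the first relation of Lemma~\ref{lem:rel_BG} to obtain, as rational functions in $\x$,
\[
\LatEnu_{P \setminus |\Delta|}(\x) \;=\; \sum_{\emptyset \neq F \subseteq P} (-1)^{\dim F}\,\LatEnu_{T_{P, \Delta}(F)}(\x).
\]
For every face $F$ with $\dim F \geq 1$, the tangent cone $T_P(F)$ has a lineality space of dimension $\dim F$ that contains the full-rank sublattice generated by $v - w$ for $v, w$ vertices of $F$. The relative tangent cone $T_{P, \Delta}(F)$ is obtained by removing from $T_P(F)$ the union of its facets induced by facets of $\Delta$ containing $F$, and each such facet of $T_P(F)$ contains the full lineality subspace. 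Hence $T_{P, \Delta}(F)$ is invariant under translation by a full-rank sublattice of this subspace, and $\LatEnu_{T_{P, \Delta}(F)}(\x) \equiv 0$ as a rational function. Only the vertex terms survive, yielding the first identity.

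For the second identity, I would apply the first identity to the reflected polytope $-P$ with reflected subcomplex $-\Delta^\prime$. Using the identifications $T_{-P, -\Delta^\prime}(-v_i) = -T_{P, \Delta^\prime}(v_i)$ and $\LatEnu_{-S}(\x) = \LatEnu_S(\tfrac{1}{\x})$, this gives
\[
\LatEnu_{-(P \setminus |\Delta^\prime|)}(\x) \;=\; \sum_i \LatEnu_{T_{P, \Delta^\prime}(v_i)}(\tfrac{1}{\x}),
\]
while substituting $\x \mapsto \tfrac{1}{\x}$ in the first identity gives the analogous decomposition of $(-1)^d \LatEnu_{P \setminus |\Delta|}(\tfrac{1}{\x})$. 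Matching the two reduces the statement to the vertex-local reciprocity
\[
(-1)^d\,\LatEnu_{T_{P, \Delta}(v_i)}(\tfrac{1}{\x}) \;=\; \LatEnu_{T_{P, \Delta^\prime}(v_i)}(\tfrac{1}{\x}),
\]
which, after translating $v_i$ to the origin, is precisely Theorem~\ref{thm:MR1} (or its weakly CM generalization, Theorem~\ref{thm:atdvc}) applied to the pointed cone $T_P(v_i) - v_i$ with the complementary subcomplexes induced by $\Delta$ and $\Delta^\prime$ in its boundary. The main obstacle is verifying that these induced subcomplexes are (weakly) Cohen-Macaulay so that the local reciprocity applies; this should follow from the weakly CM hypothesis on $\Delta$, since $\lk_\Delta(v_i)$ is CM and the induced subcomplex in the tangent cone is combinatorially a cone on this link, which preserves Cohen-Macaulayness.
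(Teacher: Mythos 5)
Your proof of the first identity follows the paper essentially verbatim: apply $\LatEnu$ to the first relation of Lemma~\ref{lem:rel_BG} and kill the contributions of non-vertex faces by the translation-invariance (non-pointedness) argument. That part is fine.

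The second identity is where your argument breaks down, and the failure is substantive. You reduce the claim to the termwise ``vertex-local reciprocity''
\[
(-1)^d\,\LatEnu_{T_{P, \Delta}(v_i)}(\tfrac{1}{\x}) \;=\; \LatEnu_{T_{P, \Delta^\prime}(v_i)}(\tfrac{1}{\x}),
\]
and assert that after translating $v_i$ to the origin this is ``precisely Theorem~\ref{thm:MR1}.'' It is not. Writing $T_{P,\Delta}(v_i) = v_i + (C_i \setminus |\Delta_i|)$ with $C_i = \cone(P - v_i)$ and $\Delta_i, \Delta'_i$ the induced complementary subcomplexes, your identity unwinds to
\[
(-1)^d\,\LatEnu_{C_i\setminus|\Delta_i|}(\tfrac{1}{\x}) \;=\; \LatEnu_{C_i\setminus|\Delta'_i|}(\tfrac{1}{\x}),
\]
whereas Theorem~\ref{thm:MR1} gives $(-1)^d \LatEnu_{C_i\setminus|\Delta_i|}(\tfrac{1}{\x}) = \LatEnu_{C_i\setminus|\Delta'_i|}(\x)$. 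The argument on the right-hand side is $\x$, not $\tfrac{1}{\x}$, and these differ: for instance with $C_i$ the positive quadrant in $\R^2$, $\Delta_i$ the positive $x$-axis and $\Delta'_i$ the positive $y$-axis, one gets $\tfrac{x}{(1-x)(1-y)}$ on one side and $\tfrac{y}{(1-x)(1-y)}$ on the other. Equivalently, Theorem~\ref{thm:MR1} multiplied through by $\x^{-v_i}$ gives $(-1)^d\LatEnu_{T_{P,\Delta}(v_i)}(\tfrac{1}{\x}) = \x^{-2v_i}\LatEnu_{T_{P,\Delta'}(v_i)}(\x)$, and the stray factor $\x^{-2v_i}$, depending on $i$, prevents the termwise matching you need.

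This translation mismatch is exactly the problem that the \emph{second} Brianchon--Gram relation of Lemma~\ref{lem:rel_BG}, the one involving the \emph{inverted} tangent cones $T^{-1}_{P,\Delta}(F) = \aff(F) - C_P(F)$, is introduced to solve. The inverted tangent cone at $v_i$ is $v_i - C_i$, which reflects the cone but keeps the apex at $v_i$; by contrast, the reflected tangent cone $-T_P(v_i) = -v_i - C_i$ that arises in your approach has apex at $-v_i$. Applying Theorem~\ref{thm:MR1} at each vertex connects $T_{P,\Delta}(v_i)$ (under $\x \mapsto \tfrac{1}{\x}$) to the correctly translated inverted cone, and the second Brianchon--Gram relation provides the matching global decomposition in terms of inverted tangent cones. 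Your proof skips the inverted tangent cones entirely by invoking the first identity on $-P$ with $-\Delta'$; the decomposition you get on the right is correct, but it cannot be matched termwise with the left by Theorem~\ref{thm:MR1} alone, and you offer no argument for why the sums should coincide otherwise. The gap is genuine and your route, as written, does not close it.
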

\begin{proof}
    The first statement follows from the same consideration as in~\cite{bhs}:
    Observe that for $S \subset \R^d$, we have
    $\LatEnu_S(x) = \sum_{a \in \Z^d} [S](a) \x^a$ and from Lemma~\ref{lem:rel_BG}
    we get
    \[
    \LatEnu_{P{\setminus}|\Delta|}(\x) \ = \ \sum_{F} 
    (-1)^{\dim F}\LatEnu_{T_{P,\Delta}(F)}(\x)
    \]
    where the sum is over all non-empty faces $F \subseteq P$. Now if $F$ is
    not a vertex, the relative tangent cone $T_{P,\Delta}(F)$ is not pointed,
    that is, $t + T_{P,\Delta}(F) = T_{P,\Delta}(F)$ for some $t \neq 0$. On
    the level of lattice point enumerators, this means
    $\x^t\LatEnu_{T_{P,\Delta}(F)}(\x) = \LatEnu_{T_{P,\Delta}(F)}(\x)$ and thus
    $\LatEnu_{T_{P,\Delta}(F)}(\x) = 0$. This proves the first statement.

    By the same token, we get from Lemma~\ref{lem:rel_BG}
    \[
    (-1)^d \LatEnu_{P{\setminus}|\Delta^\prime|}(\x) \ = \ \sum_{F} (-1)^{\dim
    F}\LatEnu_{T^{-1}_{P,\Delta}(F)}(\x)
    \]
    and thus
    \[
    (-1)^d \LatEnu_{P{\setminus}|\Delta^\prime|}(\x) \ = \ \sum_{i=1}^n 
    \LatEnu_{T^{-1}_{P,\Delta}(v_i)}(\x)
    \]
    Let us write $T_{P,\Delta}(v_i) = v_i + C_i {\setminus} |\Delta_{v_i}|$
    where $C_i$
    is a rational polyhedral cone and $\Delta_i = \Delta_{v_i}$ is the subcomplex
    induced by $\Delta$. In particular, $\LatEnu_{T_{P,\Delta}(v_i)}(\x) =
    \x^{v_i}\LatEnu_{C_i {\setminus} |\Delta_i|}(\x)$. Since $\Delta$ is weakly
    Cohen-Macaulay, we have that $\Delta_i$ is Cohen-Macaulay and  by
    Theorem~\ref{thm:MR1}
    \[
    (-1)^d \LatEnu_{T_{P,\Delta}(v_i)}(\tfrac{1}{\x}) \ = \ 
    \x^{-v_i}\LatEnu_{C_i {\setminus} |\Delta^\prime_i|}(\x) \ = \
    \LatEnu_{T^{-1}_{-P,-\Delta^\prime}(-v_i)}(\x)
    \]
    For the finishing touch, we calculate
    \[
    \LatEnu_{P {\setminus} |\Delta|}(\tfrac{1}{\x}) \ = \ 
    \sum_{i=1}^n \LatEnu_{T_{P,\Delta}(v_i)}(\tfrac{1}{\x}) 
    \ = \ 
    \sum_{i=1}^n (-1)^d
    \LatEnu_{T^{-1}_{-P,-\Delta^\prime}(-v_i)}(\x)
    \ = \ 
    (-1)^d \LatEnu_{-P({\setminus}|\Delta^\prime|)}(\x) \qedhere
    \]
\end{proof}

\section{Topology of reciprocal domains}
\label{sec:top}

Theorem~\ref{thm:MR1} and Theorem~\ref{thm:atdvc} apply to full-dimensional
(weakly) Cohen-Macaulay complexes in the boundaries of polytopes.  In this
section we discuss what forms these complexes can take.  In~\cite{MR06},
Miller and Reiner gave an example of a full-dimensional Cohen-Macaulay
subcomplex in the boundary of a polytope that is not contractible and hence
not a ball; they argued that, for instance, the Mazur manifold can occur.  The
purpose of this section is to generalize this remark. We refer to
\cite{RourkeSanders} for the basic notions of PL topology.
  
\begin{thm}\label{thm:CMP}
Let $B$ be any PL manifold of dimension $d \ge 5$ such that 
    \begin{compactenum}[\rm (a)]
        \item The natural inclusion $\pi_1(\partial B)\hookrightarrow
            \pi_1(B)$ is surjective, and 
        \item $B$ is homologically trivial, i.e., $\rH_*(B) = \rH_*(B_d)$.
    \end{compactenum}
    Then there exists a $(d+1)$-polytope $P$ and a subcomplex $\widetilde{B}
    \subseteq \B(P)$ such that $\widetilde{B}$ is PL-homeomorphic to 
    $B$. In particular, $\widetilde{B}$ is a full-dimensional 
    weakly Cohen-Macaulay subcomplex of $\partial P$.
  \end{thm}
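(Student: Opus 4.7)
The plan is to construct $P$ and $\widetilde B$ in two stages: first, produce a PL embedding of $B$ as a full-dimensional subset of a PL $d$-sphere $S$; second, realize $S$ (after suitable subdivision) as the boundary complex of a $(d+1)$-polytope, with $\widetilde B$ the image of $B$.

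For the first stage, the goal is to manufacture a compact PL $d$-manifold $D$ with $\partial D = \partial B$ such that $S := B \cup_{\partial B} D$ is PL-homeomorphic to $S^d$. Since $d \ge 5$, by the PL Poincar\'e theorem it suffices to produce a simply connected homology $d$-sphere. Such a $D$ can be built by surgery-theoretic handle manipulation: one attaches PL $2$-handles to a collar of $\partial B$ along curves killing generators of $\pi_1(\partial B)$, and then attaches higher-index handles to cancel excess homology, producing a simply connected $D$ which is a homology $d$-ball. A Seifert--van Kampen calculation then gives $\pi_1(S) = \pi_1(B)\ast_{\pi_1(\partial B)}\pi_1(D)$, which is trivial thanks to hypothesis (a) together with $\pi_1(D) = 1$. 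A Mayer--Vietoris computation, using hypothesis (b), shows $H_*(S) = H_*(S^d)$. Hence $S$ is a PL $d$-sphere containing $B$ as a full-dimensional PL subcomplex.

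For the second stage, we realize $S$ as the boundary of a convex $(d+1)$-polytope. Since PL $d$-spheres are not in general polytopal for $d \ge 4$, we must use the flexibility in the choice of $D$ and its triangulation. Concretely, one realizes a closed neighborhood of $B$ in convex position in $\R^{d+1}$ --- possible after sufficient subdivision, since $B$ is PL --- and then polytopally realizes $D$ on the complementary side by implementing the handle attachments of stage one through successive convex vertex additions. Taking the convex hull in $\R^{d+1}$ produces the desired polytope $P$, and the image $\widetilde B$ of $B$ under this realization sits in $\B(P)$ as a subcomplex.

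The main obstacle is genuinely the polytopal realization in stage two: the existence of an abstract PL embedding, under (a) and (b), is a standard piece of handle-theoretic surgery, but turning the combinatorial handle additions into convex-geometric operations that leave $B$ and $D$ on the boundary of a single convex body is the substantive geometric content of the theorem. The full-dimensional weakly Cohen-Macaulay property of $\widetilde B$ then follows automatically from its being a PL manifold of dimension $d$ embedded in $\partial P$.
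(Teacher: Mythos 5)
Your stage 1 essentially re-derives Kervaire's theorem (every PL homology $(d-1)$-sphere, $d\geq 5$, bounds a contractible PL $d$-manifold); the Seifert--van Kampen and Mayer--Vietoris bookkeeping and the appeal to the generalized Poincar\'e conjecture are correct and match the intended argument. Invoking Kervaire's result directly would have saved the surgery, but the content is the same.

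The genuine gap is in stage 2. The assertions that one can ``realize a closed neighborhood of $B$ in convex position in $\R^{d+1}$'' and then ``polytopally realize $D$ on the complementary side by implementing the handle attachments \dots\ through successive convex vertex additions'' are unsubstantiated, and as a general recipe they fail: there is no direct way to place an arbitrary PL $d$-manifold-with-boundary in convex position on the boundary of a $(d+1)$-polytope (that is exactly what the theorem is trying to conclude), and handle attachments do not translate into vertex additions on a convex body in any controllable fashion. Your closing remark that this convex realization is ``the substantive geometric content of the theorem'' misidentifies where the work lies. Once $S = B \cup_{\partial B} D$ is known to be a PL $d$-sphere, the polytopal realization is soft: by definition $S$ is PL-homeomorphic to $\partial\Delta^{d+1}$, and any such PL homeomorphism gives (after refining to a regular/coherent triangulation of the image subdivision of $\partial\Delta^{d+1}$) a subdivision $S'$ of $S$ combinatorially equivalent to the boundary complex $\B(P)$ of a simplicial $(d+1)$-polytope $P$; the subcomplex of $\B(P)$ corresponding to $B \subset S$ under this equivalence is the desired $\widetilde B$. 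The heavy lifting is entirely in stage 1, which you handled correctly; stage 2 as you wrote it would need to be replaced by this standard PL-sphere-to-polytope step.
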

Any homology manifold $B$ satisfying assumptions (a) and (b) is a
\Defn{homology ball}.
  
\begin{proof}
    By~\cite[Thm.\ 3]{Kervaire}, there is a contractible PL manifold $M$ for
    which $\partial M$ is PL homeomorphic to $\partial B$. Then the gluing of
    $M$ and $B$ along their boundaries is a PL-sphere $S$, since it is PL
    (because $B$ and $M$ are PL), simply connected (by property (a) of $B$ and
    the fact that $M$ is contractible) and has the homology of a sphere (since
    both $M$ and $B$ have the homology of a sphere); consequently, it is a PL
    sphere by the generalized Poincar\'e conjecture~\cite{ZeemanP}.  In
    particular, we have that there exists a subdivision $S'$ of $S$ that is
    combinatorially equivalent to the boundary complex $S''$ of a
    $(d+1)$-polytope $P$. The subcomplex of $S''$ corresponding to $B$ is the
    desired complex $\widetilde{B}$.   
\end{proof}
  
\begin{cor}
    Every contractible PL $d$-manifold $B$, $d\geq 5$ can be realized, up to
    PL homeomorphism, as a full-dimensional weakly Cohen-Macaulay subcomplex
    in the boundary of a $(d+1)$-polytope.  
\end{cor}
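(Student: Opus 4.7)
The plan is to deduce the corollary as an immediate specialization of Theorem~\ref{thm:CMP}. Given a contractible PL $d$-manifold $B$ with $d \ge 5$, it suffices to verify that $B$ satisfies the two hypotheses (a) and (b) of that theorem; once this is done, the theorem itself produces the desired polytope $P$ and subcomplex $\widetilde{B} \subseteq \B(P)$, and asserts in its conclusion that $\widetilde{B}$ is automatically a full-dimensional weakly Cohen-Macaulay subcomplex of $\partial P$.

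For hypothesis (a), note that contractibility of $B$ implies $\pi_1(B) = 0$, so the inclusion-induced map $\pi_1(\partial B) \to \pi_1(B)$ has trivial target and is therefore trivially surjective; one does not even need to inspect the boundary. For hypothesis (b), contractibility yields $\rH_\ast(B) \cong \rH_\ast(\mathrm{pt}) \cong \rH_\ast(B_d)$ directly. Thus both conditions are satisfied without further work, and Theorem~\ref{thm:CMP} applies to give a $(d+1)$-polytope $P$ together with a subcomplex $\widetilde{B} \subseteq \B(P)$ that is PL-homeomorphic to $B$ and is weakly Cohen-Macaulay of full dimension in $\partial P$, proving the corollary.

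There is no serious obstacle in the argument, since the corollary is properly weaker than the theorem: contractibility implies ``homology ball with $\pi_1$-surjective boundary inclusion.'' The only subtlety worth flagging is the dimension restriction $d \ge 5$, which is inherited from Theorem~\ref{thm:CMP} and ultimately from its two external inputs, namely Kervaire's construction of a contractible PL manifold with prescribed PL boundary and the generalized Poincar\'e conjecture used to identify the glued PL sphere with a polytopal boundary after subdivision.
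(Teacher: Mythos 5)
Correct, and it is the same (implicit) argument the paper intends: the corollary is stated without a separate proof precisely because a contractible $B$ trivially satisfies both hypotheses of Theorem~\ref{thm:CMP} — $\pi_1(B)=0$ makes (a) vacuous, and vanishing reduced homology gives (b) — which you verify cleanly.
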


This suggests that every PL manifold satisfying (a) and (b) of Theorem~\ref{thm:CMP} is contractible. This is not the case: 

\begin{example}
Let $S$ denote a PL homology sphere that is not $S^d$, such as Poincar\'e's homology sphere, and let $\Delta$ denote any facet of $S$. Then $B:=(S-\Delta)\times [0,1]$ is a homology ball, but homotopy equivalent to $S-\Delta$, which has $\pi_1(S)=\pi_1(S-\Delta)\neq 0$ and is consequently not contractible.
\end{example}

Theorem~\ref{thm:atdvc} applies more generally to subcomplexes in the boundary
of homology manifolds; in this case, we are surprisingly flexible:

\begin{thm}\label{thm:CMH}
    Let $M$ denote any homology manifold with vanishing reduced homology. Then
    there exists a homology ball that contains $M$ as a full-dimensional
    subcomplex of its boundary.  
\end{thm}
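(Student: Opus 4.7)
The plan is to realize $M$ inside the boundary of a suitable homology ball in two steps: first cap $M$ off to form a closed homology $d$-sphere $S$ containing $M$ as a full-dimensional subcomplex, then take $B := S * v$, the polyhedral cone on $S$ with a new apex vertex $v$, and verify that $B$ is a homology $(d+1)$-ball with $\partial B = S$.

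For the first step I start by observing that $\partial M$ is a homology $(d-1)$-sphere. This follows from Lefschetz duality for the pair $(M,\partial M)$ combined with $\rH_*(M) = 0$, via the long exact sequence of the pair. Now let $N := \partial M * w$ be the polyhedral cone on $\partial M$ over a fresh apex $w$, and form $S := M \cup_{\partial M} N$. A Mayer--Vietoris computation, using $\rH_*(M) = 0 = \rH_*(N)$, collapses to $\rH_i(S) \cong \rH_{i-1}(\partial M)$, which yields $\rH_*(S) \cong \rH_*(S^d)$. That $S$ is itself a homology $d$-manifold is a standard link check: for $F$ lying in the interior of $M$ or of $N$, $\lk_S(F)$ equals $\lk_M(F)$ or $\lk_N(F)$ respectively; and for $F \subseteq \partial M$, $\lk_S(F) = \lk_M(F) \cup_{\lk_{\partial M}(F)} \lk_N(F)$ is the union of two homology balls glued along their common homology-sphere boundary, hence a homology sphere.

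For the second step I set $B := S * v$ and verify that $B$ is a homology manifold with $\partial B = S$ by inspecting links. The apex satisfies $\lk_B(v) = S$, a homology $d$-sphere, so $v$ is an interior point. For any face $F \subseteq S$ the link $\lk_B(F) = \lk_S(F) * v$ is a cone over a homology sphere, hence contractible, and $F$ is therefore a boundary face. For a face of the form $F' * v$ with $F' \in S$, $\lk_B(F' * v) = \lk_S(F')$ is again a homology sphere, so such a face is interior. Since $B$ is contractible, conditions (a) and (b) in the definition of a homology ball are both automatic, so $B$ is a homology $(d+1)$-ball, and $M \subseteq S = \partial B$ is full-dimensional by construction.

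The one mildly technical point, and the step I would be most careful about, is the link bookkeeping showing that the glued complex $S$ is in fact a homology manifold at faces of $\partial M$. Apart from that, no deep input is required; in particular, and in contrast with Theorem~\ref{thm:CMP}, no appeal to the generalized Poincar\'e conjecture or to Kervaire's bounding theorem is needed, because the target object is only required to be a homology ball and can be produced directly as the polyhedral cone over a homology sphere.
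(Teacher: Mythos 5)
Your argument is correct and reaches the same goal as the paper's, but via a genuinely different intermediate construction. Where you first cone off $\partial M$ with a fresh apex $w$ and glue, forming $S = M \cup_{\partial M} (\partial M * w)$, the paper instead takes the \emph{double} $D(M,\partial M) = M \cup_{\partial M} M$, i.e.\ glues two copies of $M$ along their common boundary. Both yield a closed homology $d$-sphere containing $M$ as a full-dimensional subcomplex, and both proofs finish identically by coning over that sphere. The trade-off: your route requires a preliminary step establishing that $\partial M$ is itself a homology $(d-1)$-sphere (Lefschetz duality plus the long exact sequence of the pair) before the cone $\partial M * w$ can be certified as a homology ball, and then a face-by-face link check on the glued complex; the paper's double sidesteps some of this, since $D(M,\partial M)$ is manifestly a homology manifold without boundary by excision, and its sphere homology follows directly from $\rH_*(M) = 0$ together with $H_i(D,M) \cong H_i(M,\partial M) \cong H^{d-i}(M)$. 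Your version is more explicit and self-contained in its link bookkeeping, which you rightly flag as the technical crux; the paper's is more compact. One small point worth making explicit in either version: $\partial M \neq \emptyset$, which follows from $\rH_*(M)=0$ together with Poincar\'e duality (a closed acyclic homology $d$-manifold would have $H_d = \Z$, a contradiction), so the constructions do not degenerate.
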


\begin{proof}
    Let $D(M,\partial M)$ denote the double of $M$ (that is, the result of
    gluing two manifolds PL homeomorphic to $M$ along their isomorphic
    boundaries). By excision, the complex $D(M,\partial M)$ is a homology
    manifold without boundary which is homologically equivalent to a sphere.
    Thus, the cone over $D(M,\partial M)$ is a homology ball, as desired.
\end{proof}

\bibliographystyle{siam}
\bibliography{AlexanderDualValuations}

\end{document}